\def\sqr#1#2{{\vcenter{\vbox{\hrule height.#2pt
				\hbox{\vrule width.#2pt height#1pt \kern#1pt \vrule width.#2pt}
				\hrule height.#2pt}}}}
\def\5n{\negthinspace \negthinspace \negthinspace \negthinspace \negthinspace }
\def\4n{\negthinspace \negthinspace \negthinspace \negthinspace }
\def\3n{\negthinspace \negthinspace \negthinspace }
\def\2n{\negthinspace \negthinspace }
\def\1n{\negthinspace }
\def\dbE{\mathbb{E}}
\def\dbF{\mathbb{F}}
\def\dbJ{\mathbb{J}}
\def\dbR{\mathbb{R}}
\def\dbS{\mathbb{S}}
\def\dbV{\mathbb{V}}
\def\sL{\mathscr{L}}
\def\sM{\mathscr{M}}
\def\sP{\mathscr{P}}
\def\sU{\mathscr{U}}
\def\sX{\mathscr{X}}
\def\cA{{\cal A}}
\def\cF{{\cal F}}
\def\cT{{\cal T}}
\def\BE{{\bf E}}
\def\BF{{\bf F}}
\def\BH{{\bf H}}
\def\BP{{\bf P}}
\newcommand{\R}{\mathbb R}
\def\Ba{{\bf a}}
\def\Bb{{\bf b}}
\def\Bf{{\bf f}}
\def\Bg{{\bf g}}
\def\ds{\displaystyle}
\def\ns{\noalign{\ss}}
\def\ss{\smallskip}
\def\q{\quad}
\def\qq{\qquad}
\def\({\Big (}
\def\){\Big )}
\def\[{\Big[}
\def\]{\Big]}
\def\lan{\langle}
\def\ran{\rangle}
\def\b{\beta}
\def\d{\delta}
\def\e{\varepsilon}
\def\k{\kappa}
\def\t{\tau}
\def\f{\varphi}
\def\G{\Gamma}
\def\esssup{\mathop{\rm esssup}}
\def\cd{\cdot}
\def\les{\leqslant}
\def\bde{\begin{definition}\label}
	\def\ede{\end{definition}}
\def\be{\begin{equation}}
\def\bel{\begin{equation}\label}
\def\ee{\end{equation}}
\def\bt{\begin{theorem}\label}
	\def\et{\end{theorem}}
\def\bc{\begin{corollary}\label}
	\def\ec{\end{corollary}}
\def\bl{\begin{lemma}\label}
	\def\el{\end{lemma}}
\def\bp{\begin{proposition}\label}
	\def\ep{\end{proposition}}
\def\bas{\begin{assumption}\label}
	\def\eas{\end{assumption}}
\def\br{\begin{remark}\label}
	\def\er{\end{remark}}
\def\bex{\begin{example}\label}
	\def\ex{\end{example}}
\def\ba{\begin{array}}
	\def\ea{\end{array}}
\def\ben{\begin{enumerate}}
	\def\een{\end{enumerate}}
\def\square#1{\vbox{\hrule\hbox{\vrule height#1%
			\kern#1\vrule}\hrule}}
\def\rectangle#1#2{\vbox{\hrule\hbox{\vrule height#1%
			\kern#2\vrule}\hrule}}
\font\tenbb=msbm10 \font\sevenbb=msbm7 \font\fivebb=msbm5
\newtheorem{theorem}{\indent Theorem}[section]
\newtheorem{proposition}[theorem]{\indent Proposition}
\newtheorem{corollary}[theorem]{\indent Corollary}
\newtheorem{lemma}[theorem]{\indent Lemma}
\theoremstyle{definition}
\newtheorem{definition}[theorem]{\indent Definition}
\newtheorem{remark}[theorem]{\indent Remark}
\newtheorem{example}[theorem]{\indent Example}
\newtheorem{assumption}[theorem]{\indent Assumption}
\def\bea{\begin{equation*}}
\def\eea{\end{equation*}}
\def \bel{\begin{equation}\label}
\def\eel{\end{equation}}
\def\ba{\begin{array}}
\def\ea{\end{array}}
\newcommand{\ad}{&\!\!\!\displaystyle}
\def\({\Big (}
\def\){\Big )}
\def\[{\Big[}
\def\]{\Big]}
\def\q{\quad}
\def\qq{\qquad}
\def\d{\delta}
\def\e{\varepsilon}
\def\ds{\displaystyle}
\def\ns{\noalign{\smallskip}}
\def\argmin{\mathop{\rm argmin}}
\begin{document}

\title{ Closed-loop Equilibrium for Time-Inconsistent  McKean-Vlasov Controlled Problem\thanks{This research was supported in part by the Simons Foundation (grant award number 523736).}}

\author{Hongwei Mei\thanks{Department of Mathematics, The University of Kansas, Lawrence, KS 66045, U.S. (hongwei.mei@ku.edu).}\,\, \ \ and \ \ Chao Zhu\thanks{Department of Mathematical Sciences, University of Wisconsin-Milwaukee, Milwaukee, WI 53201, U.S. (zhu@uwm.edu).}}
\maketitle
\begin{abstract} The paper  deals with   a class of time-inconsistent control problems for   McKean-Vlasov dynamics.  By solving a backward time-inconsistent Hamilton-Jacobi-Bellman  (HJB for short) equation coupled with a forward distribution-dependent stochastic differential equation,  we investigate the existence and uniqueness  of a closed-loop equilibrium for such time-inconsistent distribution-dependent control problem. Moreover,    a special case of semi-linear   McKean-Vlasov dynamics  with a quadratic-type cost functional is considered due to its special structure.
\end{abstract}
\section{Introduction}

Let  $(\Omega,\cF,\dbF, \BP)$ be a complete filtered probability space on which is defined an $m$-dimensional standard Brownian distribution $\{W(t):0\leq t\leq T\}$, where  $\dbF=\{\cF_t:0\leq t\leq T\}$   is the natural  filtration augmented by all $\BP$-null sets. Denote by $\sP_2(\dbR^d)$  the probability measure space on $\dbR^d$ with finite second moments.

The general controlled  McKean-Vlasov dynamic can be formulated as the following  stochastic differential equation (SDE for short)
\begin{equation}\label{intrequ}\begin{cases} dX(t)=a(t,X(t),\rho(t);u(t))dt+b(t,X(t),\rho(t);u(t))dW(t);\q 0\leq t\leq T;\\
\ns X(0)=\xi,\end{cases} \end{equation}
where $t_{0}\in [0,T)$,  $\rho(t)$ is the distribution law of $X(t)$, $a:[0,T]\times\dbR^d\times\sP_2(\dbR^d)\times U\mapsto\dbR^d$ and $b:[0,T]\times\dbR^d\times\sP_2(\dbR^d)\times U\mapsto\dbR^{d\times m}$. In \eqref{intrequ}, $X(t)$ is called the {\it state-process} valued in $\dbR^d$ and $u(t)$ is called the {\it control-process}
valued in a  metric space $U$. 
In some literatures, \eqref{intrequ} is also called a {\it distribution-dependent controlled diffusion.} Under some mild conditions, given $u(\cdot)$ in the space of admissible controls $\sU$, it can be proved that \eqref{intrequ}  possesses a unique solution in some appropriate space.

Let the running cost function  $ f_0:[0,T]\times\dbR^d\times U\mapsto \dbR^+$ and the terminal cost function  $ g_0:\dbR^d\mapsto \dbR^+$ be measurable. The cost functional  is defined as
\begin{equation}\label{intrvalue}J^0(t_0,\xi;u(\cdot))=\BE\bigg[\int_{t_0}^Te^{-\lambda(s-t_0)} f_0\big(s,X(s);u(s)\big)ds+e^{-\lambda(T-t_0)} g_0\big(X(T)\big)\bigg].\end{equation}
where $\lambda>0$ is a {\it discounting rate}. A common optimization problem is  to find an admissible $u^*(\cdot)$ such that
$$J^0(t_0,\xi;u^*(\cdot))=\inf_{u(\cdot)\in\sU}J^0(t_0,\xi;u(\cdot)).$$

If the system \eqref{intrequ} is independent of distribution law (i.e., $a,b$ are independent of $\rho$), such a problem reduces to  the classical stochastic  control problem  which  has been well investigated in the last century (e.g. see \cite{Yong1999}). One of the well-known approaches is to derive  a HJB  equation  through dynamic programming. 

If the system is distribution-dependent, i.e. McKean-Vlasov dynamics, the problem becomes different.  The analysis of McKean-Vlasov SDEs has a long history since the pioneering work  \cite{Kac1956,Mckean1967} and has   attracted  resurgent   attentions in recent years thanks to the recent developments in mean-field game (MFG) problems.  Compared with the classical   optimal control problems for Markov processes, the counterpart  for McKean-Vlasov processes becomes  different because   the dynamic programming principle can not be applied directly. 
 A common approach to overcome such a difficulty is to lift the state space up into the space of probability  measures. Through the Bellman principle, one can derive   an HJB equation on the space of probability measures (e.g. see \cite{Pham2018} and the references therein). Using mass transport theory, it is possible to investigate the viscosity solution of the HJB equation on the space of probability measures. See also \cite{Yong2015}, which treats   a special  linear-quadratic case and obtains an optimal feedback control   by analyzing a linear mean-field forward-backward 
 SDE derived from a variational method. 

An alternative  idea  
is developed in a serial papers on    MFG \cite{Larsy2007a,Larsy2007b, Larsy2006a,Larsy2006b}
 and \cite{
 	Huang2007,Huang2007a,Huang2007b,Huang2010b} where the authors considered a backward HJB equation coupled with a forward  transport equation on the space of probability measures (or a forward SDE) to derive a mean-field equilibrium. Applying the similar idea to the explicit  control problem of system \eqref{intrequ} with cost functional \eqref{intrvalue},
the first step is to solve a classical HJB equation  which is concluded from the classical optimal control  of system  given a guiding (fixed) process $\rho(\cdot)$. At the same time, a  feedback control can be determined if the HJB equation is regular enough. The second step is to verify that  the guiding process $\rho(\cdot)$ coincides with the distribution law of the solution process of SDE \eqref{intrequ} using the feedback control. If the two-step verification is fulfilled, the feedback control is called a mean-field equilibrium. One can see that the mean-field equilibrium is essentially defined by a fixed point process. Note, however,   such an equilibrium  is not an optimal control strategy in general.    

Given a guiding process $\rho(\cdot)$, the HJB equation in the MFG is derived from  the dynamic programming and Bellman principle if the cost function in \eqref{intrvalue} is  exponential discounting. This is the so-called time-consistent case, i.e.,  the optimal control determined now stays optimal in the future.  
In many real life problems, such a requirement is too ideal and far from reality.  For example, one must often adjust her decisions as time goes by. Mathematically,  if the cost function is in  non-exponential discounting or hyperbolic discounting situations,  the control problem is not time-consistent anymore. For those problems, it is impossible for us to find an optimal control at the initial time which stays optimal in the future. This is the so-called time-inconsistency. The main idea is to find a local optimal control or strategy (instead of a  global optimal) to save for future.  Lots of works have been devoted to dealing with time-inconsistency in the last decade (e.g. see  \cite{Hu2010,Ekeland2010,Ekeland2008,Ekeland2012,Qi2017,Yong2012a,Yong2012b,MeiYong2019,Wei2017}). One also may refer to the survey paper \cite{Yan2019} and the references therein. Among those papers, two types of time-inconsistent equilibrium are considered, namely the open-loop  equilibrium control and the closed-loop equilibrium strategy. For example,  the open-loop equilibrium control for linear-quadratic case is characterized via a
maximum-principle-like methodology  in \cite{Hu2010}. To consider the closed-loop
equilibrium strategy for time-inconsistent control problem, the author derived a so-called time-inconsistent HJB equation via an $N$-player game in  \cite{Yong2012b} and verifies the local optimality in \cite{Wei2017}.

 Compared to time-consistent problems, time-inconsistency brings new interesting features  as well as  mathematical challenges. 
 One of the main    difficulties  brought by time-inconsistency for general diffusions in $\dbR^d$  lies in  the existence of time-inconsistent equilibrium strategies. For non-degenerate stochastic diffusions  in $\dbR^d$, the  existence and uniqueness  of (closed-loop) time-inconsistent equilibrium  can be found in \cite{Yong2012b}.   While for degenerate case, the existence  is  still an open problem due to  the lack of first-order regularity of the viscosity solution for a degenerate  second-order HJB equation. More explicitly, for a time-inconsistent problem in the space of $\dbR^d$, the identification of time-inconsistent equilibrium  requires that the  HJB equation admits a classical  solution, which is not necessarily true for a degenerate problem. Thus in this paper, we will assume that the system is degenerate for general cases. For the special  semi-linear-quadratic case, since the HJB has an explicit form of solutions, the non-degeneracy assumption is not necessary anymore.

In this paper, we are devoted to proving the existence and uniqueness of the {\it  equilibrium} (see Definition \ref{def-optimalcurve}) for a class of controlled McKean-Vlasov dynamics with a time-inconsistent cost functional. Previous works on time-inconsistent distribution-dependent diffusions include \cite{Ni2018a,Ni2018b,Wang2018,Yong2015} for example. We note that  the aforementioned  papers are mainly focused on a special case of linear-quadratic problems. In \cite{MeiY2019}, the authors deal with time-inconsistent distribution-dependent control problems for finite-stated Markov chains. Different from the aforementioned works, in this paper we deal with  a class of time-inconsistent control problems for   McKean-Vlasov dynamics in $\dbR^d$.

The paper is arranged as follows.
Some frequently used notations as well as some preliminary results  will be introduced in Section \ref{sect-notations}. In Section \ref{sec-mdcd}, we will introduce our main system and prove the existence and uniqueness of the solution to our system. In  Section \ref{sec:mic}, we will review the main results for time-inconsistent distribution-independent control problems. In Section \ref{sec-equi}, we will present the definition of time-inconsistent distribution-dependent equilibrium and prove its existence and uniqueness. In Section \ref{sec-semi-linear}, we will consider the similar results for a class of semi-linear systems with a quadratic cost. In Section \ref{sec:mfg}, we set a mean-field game whose equilibrium coincides with the equilibrium we find in our control problem. Finally   some concluding remarks will be made in Section \ref{sec:conrem}.

\subsection{Notations and Preliminaries}\label{sect-notations}

Let
$\sL^2(\dbR^d)$ the collection of $\dbR^d$-valued random variables with a finite second moment, i.e.,
$$\sL^2(\dbR^d):=\{X:\Omega\mapsto\dbR^d\big|X \text{ is $\cF$-measurable with }\BE|X|^2<\infty\}.$$
$\sL^2(\dbR^d) $ is equipped with the  norm
$\Vert X\Vert_{\sL^2}:=(\BE\vert X\vert^2)^{\frac12}.$
For any $X\in \sL^2(\dbR^d)$, denote by $\text{law}(X)$  the distribution of $X$.

Let $\sP_2(\dbR^d)$ be the space of  probability measures  with finite second moments equipped with Wasserstein-$2$ metric $w(\cdot,\cdot)$, i.e.
$$\ba{ll}w^2(\rho,\gamma)\ad:=\inf_{\pi\in\Pi^{\rho,\gamma}}\int_{\dbR^d\times\dbR^d}|x-y|^2\pi(dx,dy),
\ea$$
where $$\Pi^{\rho,\gamma}:=\{\pi\in \sP(\dbR^d\times\dbR^d):\pi(dx,\dbR^d)=\rho(dx),\pi(\dbR^d,dy)=\gamma(dy)\}.$$
It is easy to see that
\begin{equation}\label{cwE}w^2(\text{law}(X),\text{law}(Y))\leq \Vert X-Y\Vert^2_{\sL^2}.\end{equation}
We refer to  \cite{Villani2008} for more discussions  on Wasserstein  metrics. 

Let $\sP_2^{\d,C}(\dbR^d)$ be a  subset of $\sP_2(\dbR^d)$ defined by
$$\sP_2^{\d,C} (\dbR^d)=\bigg\{\rho\in \sP_2(\dbR^d):\int_{\dbR^d}|x|^{2+\d}\rho(dx)\leq C\bigg \}.$$
Note that   $\sP_2^{\d,C}(\dbR^d)$  is a compact subset of  $(\sP_2(\dbR^d),w)$ for any $\d,C>0$. 

	

Throughout the paper, we suppose that $\cF_0$ is large enough such that for any $\rho\in\sP_2(\dbR^2)$, there exists a $\xi\in \cF_0$ such that $\text{law}(\xi)=\rho$.

%
%
%
%
Let $\sX:=L_{\dbF}^2(\Omega;C([0,T],\dbR^d))$ be   defined as 
 $$\ba{ll}\sX\ad:=\Big\{ X:[0,T]\times\Omega\mapsto\dbR^d:  X\text{ is $\dbF$-progressively measurable}
     \\\ns\ad\qq\qq\qquad  \text{ and continuous with  }
 \BE\sup_{0\leq t\leq T}|X(t)|^2<\infty\Big\}.\ea$$
Let $ \sM:=C([0,T],\sP_2(\dbR^d))$ be the set of $\sP_2(\dbR^d)$-valued  continuous curves on $[0,T]$ equipped with the uniform metric $m$, i.e.,
\begin{equation}
\label{eq-m-metric}
m(\mu_1,\mu_2):=\sup_{0\leq t\leq T}w(\mu_1(t),\mu_2(t)).
\end{equation}
Since $(\sP_2(\dbR^d),w)$ is complete,  so is $ (\sM, m)$. 

 Write $\sM_\gamma:=\{\mu\in\sM:\mu(0)=\gamma\}.$ Define a subset of $\sM_\gamma$ by $$\ba{ll}\sM^{\d,C,\lambda}_{\gamma}\ad:=\bigg\{\mu\in\sM_{\gamma}: 
 \sup_{0\leq t\neq s\leq T}\frac{w^2(\mu(t),\mu(s))}{|t-s|}\leq \lambda,\  \mu(t)\in \sP_2^{\d, C} (\dbR^d) \text{ for any }t\in[0,T] \bigg\}.\ea $$
By the well-known Arzela-Ascoli lemma, $\sM^{\d,\lambda}_{\gamma}$ is a compact and convex  subset of $\sM$.

For any $\mu\in\sM$, we write 
$$\text{Graph}(\mu):=\{(t,\mu(t))\in[0,T]\times\sP_2(\dbR^d):0\leq t\leq T\}.$$

By \eqref{cwE},  for any $X\in\sX$, $\text{law}(X(t))$ is  continuous with respect to $t$ under the $w$-metric. Thus
we  define a map $\text{LAW}: \sX\mapsto  \sM$
by
$$\text{LAW}(X)(t):=\text{law}(X(t)),\q\text{for }0\leq t\leq T.$$
We have the following lemma.
\begin{lemma} \label{lemmaXrho}
	If $X_n\in \sX$ satisfies
	$$\lim_{n,m\rightarrow\infty}\sup_{0\leq t\leq T}\BE\vert X_n(t)-  X_m(t)\vert^2=0,$$ then
	$\text{\rm LAW}(X_n)\in \sM$ is a Cauchy sequence.
\end{lemma}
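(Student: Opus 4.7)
The plan is to reduce the claim to a single application of the inequality \eqref{cwE}, taking advantage of the fact that the metric $m$ on $\sM$ is defined as the supremum over $t$ of the Wasserstein distances.

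First I would recall that since each $X_n\in\sX$, the map $t\mapsto \text{law}(X_n(t))$ is already known (from the remark preceding the lemma) to be continuous under $w$, so $\text{LAW}(X_n)\in\sM$ is well-defined for every $n$. The only nontrivial content is the Cauchy property in $(\sM,m)$.

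Next, for fixed $n,m$ and $t\in[0,T]$, apply \eqref{cwE} to the pair $X_n(t),X_m(t)\in\sL^2(\dbR^d)$ to obtain
\begin{equation*}
w^2\bigl(\text{law}(X_n(t)),\text{law}(X_m(t))\bigr)\leq \BE\bigl|X_n(t)-X_m(t)\bigr|^2.
\end{equation*}
Taking the supremum over $t\in[0,T]$ on both sides, and using the definition \eqref{eq-m-metric} of $m$, I get
\begin{equation*}
m\bigl(\text{LAW}(X_n),\text{LAW}(X_m)\bigr)^2=\sup_{0\leq t\leq T} w^2\bigl(\text{law}(X_n(t)),\text{law}(X_m(t))\bigr)\leq \sup_{0\leq t\leq T}\BE\bigl|X_n(t)-X_m(t)\bigr|^2.
\end{equation*}
The hypothesis forces the right-hand side to tend to zero as $n,m\to\infty$, whence $\{\text{LAW}(X_n)\}$ is Cauchy in $(\sM,m)$.

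There is essentially no obstacle here, since the estimate \eqref{cwE} is stated with no restriction on the random variables beyond square-integrability, and the Cauchy condition in the hypothesis is already uniform in $t\in[0,T]$, which matches exactly the uniform nature of the metric $m$. The only minor point to be careful about is that the supremum over $t$ on the left remains controlled by the supremum over $t$ on the right after squaring, which is immediate from the monotonicity of $s\mapsto s^{1/2}$.
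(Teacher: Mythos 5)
Your proposal is correct and is exactly the argument the paper intends: the paper's proof is the one-line statement that the lemma ``is a direct consequence of \eqref{cwE}'', and you have simply written out that consequence by applying \eqref{cwE} pointwise in $t$ and taking the supremum to match the definition \eqref{eq-m-metric} of $m$. No differences to report.
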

\begin{proof}
	It is a direct consequence of \eqref{cwE}. 
\end{proof}

Finally let the control space $U$ be a metric space equipped with metric $d_U(\cdot,\cdot)$. Also let $v_0$ be some fixed point in $U$.

\section{Distribution-dependent Time-inconsistent Control}\label{sec-mdcd}

On the complete probability space $(\Omega,\cF,\BP)$, we consider the following distribution-dependent controlled stochastic differential equation
\begin{equation}\label{gen-SDE}\begin{cases} dX(t)= a(t,X(t),\rho(t);u(t))dt+ b(t,X(t),\rho(t))dW(t),\\
\rho(t)=\text{law}(X(t)),\q X(t_0)=\xi\in \cF_{t_0}\end{cases}\end{equation}
where $a:[0,T]\times\dbR^d\times\sP_2(\dbR^d)\times U\mapsto \dbR^d$, $b:[0,T]\times\dbR^d\times\sP_2(\dbR^d)\mapsto \dbR^{d\times m}$,   and $u(t)$ is the control process valued in $U$. From now on, we only consider the case when $b$ is independent of $u$, the reason of which will be explained later.

Define 
$L^2_\dbF([t_0,t_1],U)$ by
$$\begin{aligned}L^2_\dbF([t_0,t_1],U)=\bigg\{u(\cdot):[t_0,t_1]\mapsto U: u(\cdot) \text{ is $\dbF$-progressively measurable}\\\text{ with } \BE\int_{t_0}^{t_1} d^2_U(u(t),v_0)dt<\infty\bigg\}. \end{aligned}$$
Since  we are concerned with closed-loop strategies in this paper, we 
write
\begin{equation}\label{admissiblecontrol-2}\sU_\alpha:=\{{ u}:[0,T]\times\dbR^d\mapsto U\big |u(t,\cdot) \text{ is Lipschitz with a uniform Lipschitz constant $\alpha$} \}.\end{equation}
The space of admissible closed-loop Lipschitz strategy is defined as
\begin{equation}\label{admissiblecontrol}\sU:=\bigcup_{\alpha>0}\sU_\alpha.\end{equation}
Under some mild conditions (e.g. Lipschitz conditions for $b$ and $\sigma$), one can easily see that if $ u\in\sU$, the feed back control process $u(\cdot,X(\cdot))\in L^2_\dbF([0,T],U)$. Thus in essence we can regard $\sU$ as a subset of $L^2_\dbF([0,T],U)$. 

Given the running cost $f:[0,T]\times [0,T]\times\dbR^d\times U\times\sP_2(\dbR^d)\mapsto\dbR $ and the terminal cost $g: [0,T]\times\dbR^d\times U\times\sP_2(\dbR^d):\mapsto\dbR$, the  cost functional under strategy $u\in L^2_\dbF([0,T],U)$ is defined as
$$\dbV(t_0,\xi;u):=\dbJ(t_0;t_0,\xi; u)$$
where
\begin{equation}\label{cost}\dbJ(\t;t_0,\xi;u):=\BE_{t_0,\xi}\bigg[\int_{t_0}^T f\big(\t;s,X(s),\text{law}(X(s));u(s)\big)ds+ g\big(\t;X(T),\text{law}(X(T))\big)\bigg],\q \xi\in \cF_{t_0}.\end{equation}

If we restrict $u\in\sU$ and let $X$ be the solution of \eqref{gen-SDE}, then the value of $\dbJ$ and hence $\dbV$   depend only on the distribution of $\xi$.   Thus 
 we may write
$\dbJ(\t;t,\text{law}(\xi);u)$ instead of $\dbJ(\t;t,\xi;u)$. Our main effort of paper is to derive a   closed-loop strategy (see Definition \ref{def-optimalcurve}) for such a time-inconsistent distribution-dependent problem.  From now on we only consider the case $u\in\sU$, i.e., closed-loop strategy.

\begin{remark}\label{remconfu}{\rm (1)
		 One may question why the feedback control $u$ is only a function of $(t,x)$ (not distribution-dependent).  Essentially, we 
		 incorporate  the dependence on the distribution  into the dependence of $t$.
		 
		 (2) Since our strategy is in a closed-loop form which is derived from a time-inconsistent HJB equation,  we assume that the diffusion coefficient $b$ is independent of the control  $u$ to avoid the analysis on the second order regularity of the time-inconsistent HJB equation.
		 
		 (3) Note that our time-inconsistent cost is distribution-independent. Let's see the following simple example. 
	Suppose $d=1$  and that the terminal cost in \eqref{cost} is distribution dependent in the following form
	$$g_\t(x,\rho)=h(\t)\(x-\int_{\dbR^d}x\rho(dx)\)^2.$$
	Then
	$\int_{\dbR^d}g_\t(x,\rho)\mu(T,dx)$ is the product of $h(\t)$ and the variance of the distribution  $\mu(T)$. In this case, if we let
	$$\bar g_\t(x,\rho)=h(\t)\(x^2-\(\int_{\dbR^d}x\rho(dx)\)^2\),$$ then
	$\int_{\dbR^d}\bar g_\t(x,\rho)\mu(T,dx)=\int_{\dbR^d} g_\t(x,\rho)\mu(T,dx)$. Thus  $g$ and $\bar g$ give the same terminal functional in $\dbV$. We will see that in the process of deriving the fixed-point, the  terminal conditions in the Hamilton-Jacobi equation will be different. As a consequence, the time-inconsistent equilibrium will be different as well. Therefore, to avoid such possible confusion, we will compare our problem with a time-inconsistent mean-field for infinite symmetric players. Roughly speaking, the forms of $f$ and $g$ are determined by the model. 
		
	}
\end{remark}

\subsection{Existence and Uniqueness of the Solution}
In this subsection, we   show  that under 
Assumption \ref{A-0}, \eqref{gen-SDE} admits a unique solution for any $u\in\sU$.  

\begin{assumption}\label{A-0} 
	$a:[0,T]\times \dbR^d\times\sP_2(\dbR^d)\times U\mapsto\dbR^d$ and $b:[0,T]\times \dbR^d\times\sP_2(\dbR^d)\mapsto\dbR^{d\times m}$ are continuous and   satisfy 
	$$\begin{cases}
	\ad|a(t,0,\rho;v_0)|+|b(t,0,\rho)|\leq \k_0\big(1+\int_{\dbR^d}|x|^2\rho(dx)\big),\\
	\ns\ad
	 |a(t,x_1,\rho_1;v_1)-a(t,x_2,\rho_2;v_2)|\leq \k_0\big(|x_1-x_2|+d_U(v_1,v_2)+w(\rho_1,\rho_2)\big),\\ 
	\ns\ad|b(t,x_1,\rho_1)-b(t,x_2,\rho_2)|\leq \k_0(|x_1-x_2|+w(\rho_1,\rho_2)),
	%
	\end{cases}$$ for all $t\in [0,T]$, $x_{1},x_{2} \in \R^{d}$, $\rho_{1},\rho_{2}\in \sP_2(\dbR^d) $, and $v_{1},v_{2}\in U$.
\end{assumption}
\begin{lemma} \label{lawXM} Under Assumption \ref{A-0}, the the following assertions hold:
	\begin{itemize}
  \item[\rm (1)] For any $ u\in\sU$ and $\xi\in\cF_{0}$ with $\text{\rm law}(\xi)\in\sP_2(\dbR^d)$, there exists a unique solution $X\in L_{\dbF}^2(\Omega;C([0,T],\dbR^d))$ to \eqref{gen-SDE}. As a result,  by Lemma \ref{lemmaXrho}, $\text{\rm LAW}( X)\in\sM$.
	
	\item [\rm(2)] For any $u\in\sU_\alpha$, there exists a constant $\beta_\alpha$ independent of $u$ such that  
\bel{holder} \BE\sup_{0\leq t\leq T}|X(t)|^2\leq\beta_\alpha(1+\BE|\xi|^2)\q\text{and}\q w^2(\text{\rm law}(X(t)),\text{\rm law}(X(s)))\leq  \beta_\alpha(1+\BE|\xi|^2)|s-t|.\eel
	
	  \item[\rm(3)] If  $\gamma:=\text{\rm law}(\xi)$ has a finite $(2+\d)$th moment, then for any $u\in \sU_\alpha$,  there exist constants  $C_{\alpha,\gamma,\d}, \lambda_{\alpha,\gamma}>0$ such that
	\bel{Xcompact}\text{\rm LAW}(X)\in \sM_\gamma^{\d,C_{\alpha,\gamma,\d},\lambda_{\alpha, \gamma}}\eel
	where $C_{\alpha,\gamma,\d}$ depends on  $\alpha$, $\d$ and the $(2+\d)$th moment of $\gamma$ only and $\lambda$ depends on $\alpha$  and the second moment of $\gamma$ only.

\end{itemize}
\end{lemma}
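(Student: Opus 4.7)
The plan is to handle the three parts successively: part (1) by Picard iteration in the mean-field variable, and parts (2)--(3) by Itô + BDG + Gronwall moment estimates together with the basic Wasserstein bound \eqref{cwE}.

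For part (1), I would start with $X_0(\cdot)\equiv\xi$ and, given $X_n\in\sX$ with $\rho_n(t):=\text{law}(X_n(t))$, define $X_{n+1}$ as the unique strong solution of the classical (non-McKean--Vlasov) SDE
\[ dX_{n+1}(t)=a(t,X_{n+1}(t),\rho_n(t);u(t,X_{n+1}(t)))dt+b(t,X_{n+1}(t),\rho_n(t))dW(t),\q X_{n+1}(0)=\xi. \]
Existence and uniqueness of $X_{n+1}\in\sX$ is classical because $u(t,\cdot)$ is Lipschitz and Assumption \ref{A-0} renders $a(t,\cdot,\rho_n(t);\cdot)$ and $b(t,\cdot,\rho_n(t))$ Lipschitz in $x$ with linear growth (the frozen measure $\rho_n$ only contributes time-dependent additive terms). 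Then, using \eqref{cwE} to convert the Wasserstein Lipschitz bound on $(a,b)$ into an $\sL^2$ Lipschitz bound on the iterates and running a standard Itô + BDG + Gronwall estimate on the difference, one obtains
\[ \BE\sup_{r\le t}|X_{n+1}(r)-X_n(r)|^2 \;\le\; C\int_0^t \BE\sup_{r\le s}|X_n(r)-X_{n-1}(r)|^2\, ds, \]
which iterates to a $(CT)^n/n!$ bound and hence makes $\{X_n\}$ Cauchy in $\sX$. The limit solves \eqref{gen-SDE} by continuity of $a,b$ in $\rho$, and uniqueness follows from the same Gronwall argument applied to the difference of any two solutions.

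For part (2), I would apply Itô + BDG + Gronwall to $|X(t)|^2$: the Lipschitz constant $\alpha$ of $u\in\sU_\alpha$ enters through $d_U(u(t,X),v_0)\le d_U(u(t,0),v_0)+\alpha|X|$, while the $\rho$-dependence closes because $\int|y|^2\rho(t,dy)=\BE|X(t)|^2$. This yields the first inequality in \eqref{holder}. The Hölder-in-time estimate on $\text{LAW}(X)$ then follows from
\[ w^2(\text{law}(X(t)),\text{law}(X(s)))\le\BE|X(t)-X(s)|^2\le 2(t-s)\!\int_s^t\!\BE|a|^2 dr+2\!\int_s^t\!\BE|b|^2 dr, \]
where the integrands are controlled by $\beta_\alpha(1+\BE|\xi|^2)$ using Assumption \ref{A-0} and the moment bound just derived. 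For part (3), I would apply Itô to $(\varepsilon+|X(t)|^2)^{(2+\delta)/2}$, let $\varepsilon\downarrow 0$, and run Gronwall once more to derive $\sup_{t\le T}\BE|X(t)|^{2+\delta}\le C_{\alpha,\delta}(1+\BE|\xi|^{2+\delta})$; this gives the uniform $\sP_2^{\delta,C_{\alpha,\gamma,\delta}}(\dbR^d)$ bound on the marginals, which combined with the Hölder estimate from part (2) delivers \eqref{Xcompact}.

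The main obstacle is the bookkeeping in part (1): the Wasserstein Lipschitz bound on $(a,b)$ must be converted via \eqref{cwE} into an $\sL^2$ Lipschitz bound between the iterates, and the Gronwall recursion must be set up on $\BE\sup_{r\le t}|X_n(r)-X_{n-1}(r)|^2$ rather than on the pointwise quantity so that the factorial bound closes. Once this is arranged, the moment estimates in parts (2)--(3) are routine, and the claimed dependence structure of $\beta_\alpha$, $C_{\alpha,\gamma,\delta}$, and $\lambda_{\alpha,\gamma}$ on $\alpha$ and on the relevant moment of $\gamma=\text{law}(\xi)$ is immediate from the Gronwall estimates.
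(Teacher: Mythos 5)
Your proposal is correct and follows essentially the same route as the paper: Picard iteration for existence and uniqueness in part (1), and It\^o/BDG/Gronwall moment estimates combined with \eqref{cwE} for parts (2) and (3). The only immaterial difference is that the paper's iteration freezes both the state and the law at the previous iterate, defining $X_{n+1}$ by an explicit stochastic integral of $X_n$, whereas you freeze only the law and solve a classical SDE at each step; both yield the same factorial contraction bound and the same dependence of the constants on $\alpha$ and the moments of $\gamma$.
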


\begin{proof} (1) The proof is a   direct application of Picard's iteration and similar to that of Theorem 1.7 of \cite{Carmona-16}. We present the proof here  for reader's convenience.  

Let $X_0(t)=\xi$ for $0\leq t\leq T$. We define $\{ X_n,\mu_n=\text{LAW}(X_n)\}_{n\geq 0}$  recursively by
$$X_{n+1}(t):=\xi+\int_{0}^ta\big(s,X_n(s),\mu_n(s);u(s,X_n(s))\big)ds+\int_{0}^tb(s,X_n(s),\mu_n(s))dw(s),\q0\leq t\leq T.$$
Using It\^o's formula, we have 
\begin{equation}\label{difference}\ba{ll}\ad\BE\sup_{0\leq s\leq t}|X_{n+1}(s)-X_n(s)|^2\\
\ns\ad\leq 2T\int_{0}^t\BE\sup_{0\leq r\leq s}|a(r,X_n(r),\mu_n(r);u(s,X_n(r))\\
\ns\ad\qq\qq\qq\qq\qq\qq-a(r,X_{n-1}(r),\mu_{n-1}(r);u(r,X_{n-1}(r)))|^2ds\\
\ns\ad\q+2\BE\int_{0}^t|b(s,X_n(s),\mu_n(s))-b(s,X_{n-1}(s),\mu_{n-1}(s))|^2ds\\
\ns\ad\leq 2LT\int_{0}^t\BE\sup_{0\leq r\leq s}|X_n(r)-X_{n-1}(r)|^2ds.\ea\end{equation}
 Simple calculation yields that for some positive constant $M$, we have
 $$\BE\sup_{0\leq s\leq T}|X_{n+1}(s)-X_n(s)|^2\leq \frac {(2LT)^n}{n!}M,$$
 and hence
 $$\sum_{n=1}^\infty\BE\sup_{0\leq s\leq T}|X_{n+1}(s)-X_n(s)|^2<\infty.$$
 This concludes that 
 $ X_n$ is a Cauchy sequence in $\sX$ with a limit written as $ X$.  Also denote $\mu(s): = \text{law}(X(s))$ for $s\in [0, T]$.  The following verifies that $ X$ is the solution:
 $$\ba{ll}\ad\BE\sup_{0\leq t\leq T}\left|X(t)-\(\xi+\int_{0}^ta(s,X(s),\mu(s);u(s,X(s))ds+\int_{0}^tb(s,X(s),\mu(s))dw(s)\)\right|^2\\
 \ns\ad \leq \BE\sup_{0\leq t\leq T}|X(t)-X_n(t)|^2\\
 \ns\ad\q+T\BE\int_{0}^T|a(s,X(s),\mu(s);u(s,X(s))-a(s,X_n(s),\mu_n(s);u(s,X_n(s))|^2ds\\
 \ns\ad\q+\BE\int_{0}^T|b(s,X(s),\mu(s))-b(s,X_n(s),\mu_n(s))|^2ds\\
 \ns\ad\rightarrow0,\qq\text{as }n\rightarrow\infty.\ea$$
By virtue  of \eqref{difference},  the solution $X$ is unique and $\text{LAW}(X)\in\sM$. 

(2) Using It\^{o}'s formula and Grownwall's inequality, the standard arguments reveal   that for some constant $C_\alpha>0,$
\bel{ubounsupX}\BE\sup_{0\leq t\leq T}|X(t)|^2\leq C_\alpha(1+\BE|\xi|^2).\eel
Note also that $$\BE|X(t)-X(s)|^2\leq L_\alpha (1+\BE|\xi|^2)|t-s|,$$ which leads to \eqref{holder}  directly.

(3) Similarly, using It\^{o}'s formula,  one can prove that  for some constant $D_{\alpha,\d}>0$, it follows that
\bel{ubounsupX-2}\BE\sup_{0\leq t\leq T}|X(t)|^{2+\d}\leq D_{\alpha,\d}(1+\BE|\xi|^{2+\d}).\eel  \eqref{Xcompact} is a direct conclusion of \eqref{holder} and \eqref{ubounsupX}. The proof is complete.
\end{proof}

Thanks to   Lemma \ref{lawXM}, we can now  define $\cT^\gamma_1:\sU\mapsto\sM_\gamma$ by $$\cT^\gamma_1(u):=\text{LAW}(X),$$ where $X$ is the  distribution curve determined by \eqref{gen-SDE} under the strategy $u$ with initial law $\gamma\in \sP_2(\dbR^d)$.
%
The following lemma shows that  $ \cT^\gamma_1$ depends  continuously  on $ u$  in some sense.
\begin{lemma}\label{contiuityofsolution}
	 For   $u_1,u_2\in\sU_\alpha$ with
	$$d_{U}(u_1(t,x),u_2(t,x))\leq (1+|x|)\e,$$
	 there exist constants $\beta_1,\beta_\alpha>0$ such that
	\begin{equation}\label{Xnconvergence}m(\cT^\gamma_1(u_1),\cT^\gamma_1(u_2))\leq\beta_1\beta_\alpha\e T\(1+\int_{\dbR^d}|x|^2\gamma(dx)\).\end{equation}
\end{lemma}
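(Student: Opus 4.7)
The plan is to set up a coupling of the two solutions by using the same initial condition $\xi$ with $\text{law}(\xi)=\gamma$ and the same Brownian motion $W$, and then to estimate $\BE\sup_{0\le s\le t}|X_1(s)-X_2(s)|^2$ by a Gr\"onwall-type argument. Once this quantity is controlled by the right-hand side of \eqref{Xnconvergence} pointwise in $t$, inequality \eqref{cwE} immediately gives
\[
w^2\bigl(\text{law}(X_1(t)),\text{law}(X_2(t))\bigr)\le \BE|X_1(t)-X_2(t)|^2,
\]
so taking the supremum over $t\in[0,T]$ yields the metric bound for $m(\cT^\gamma_1(u_1),\cT^\gamma_1(u_2))$.

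Concretely, I would write the SDE for the difference $X_1-X_2$, apply It\^o's formula to $|X_1-X_2|^2$ (or equivalently use the $L^2$ isometry on the stochastic integral together with Cauchy--Schwarz on the drift), and pass to the supremum with the Burkholder--Davis--Gundy inequality. This produces an estimate of the form
\[
\BE\sup_{0\le s\le t}|X_1(s)-X_2(s)|^2 \le C\int_0^t \Bigl(\BE|X_1(r)-X_2(r)|^2 + \BE\,d_U^2\bigl(u_1(r,X_1(r)),u_2(r,X_2(r))\bigr)\Bigr)\,dr,
\]
where Assumption \ref{A-0} has been used to absorb the Lipschitz contributions of $a,b$ in $x$ and in $\rho$ (the latter using $w^2(\rho_1(r),\rho_2(r))\le \BE|X_1(r)-X_2(r)|^2$).

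The control term is handled by a triangle-inequality split:
\[
d_U\bigl(u_1(r,X_1(r)),u_2(r,X_2(r))\bigr)
\le d_U\bigl(u_1(r,X_1(r)),u_1(r,X_2(r))\bigr)+d_U\bigl(u_1(r,X_2(r)),u_2(r,X_2(r))\bigr).
\]
The first summand is bounded by $\alpha|X_1(r)-X_2(r)|$ since $u_1\in\sU_\alpha$, and the second by $(1+|X_2(r)|)\e$ by hypothesis. Substituting back and using Lemma \ref{lawXM}(2) to bound $\BE(1+|X_2(r)|)^2 \le 2+2\beta_\alpha(1+\int|x|^2\gamma(dx))$ uniformly in $r$, the estimate becomes
\[
\BE\sup_{0\le s\le t}|X_1(s)-X_2(s)|^2 \le C_\alpha\int_0^t \BE\sup_{0\le r\le s}|X_1(r)-X_2(r)|^2\,ds + C\beta_\alpha\,\e^2 T\Bigl(1+\textstyle\int|x|^2\gamma(dx)\Bigr).
\]
Applying Gr\"onwall's inequality yields $\sup_{0\le t\le T}\BE|X_1(t)-X_2(t)|^2 \le \beta_1^2\beta_\alpha^2 \e^2 T^2(1+\int|x|^2\gamma(dx))^2$ for suitable constants, from which \eqref{Xnconvergence} follows by taking square roots and invoking \eqref{cwE}.

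The only delicate point is the control-difference bookkeeping, because $u_i$ is evaluated along its own state process; the triangle-inequality split above is what disentangles the ``Lipschitz in state'' part (which feeds back into the Gr\"onwall loop) from the ``closeness of strategies'' part (which produces the $\e$ factor). Everything else is standard moment estimation for McKean--Vlasov SDEs.
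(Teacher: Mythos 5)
Your proof is correct and follows essentially the same route as the paper's: couple the two solutions through a common initial condition and Brownian motion, split the control difference via the Lipschitz property of $u_1$ plus the $\e$-closeness hypothesis, close the loop with Gr\"onwall using the second-moment bound from Lemma \ref{lawXM}(2), and pass to $m$ via \eqref{cwE}. If anything, your bookkeeping of the powers of $\e$ (squaring $d_U$ before integrating and taking a square root at the end) is more careful than the paper's own display, which is consistent with the stated form of \eqref{Xnconvergence}.
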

\begin{proof}  
	For $i=1,2$, let $X_i$ be the solution of \eqref{gen-SDE} under strategy $ u_i$ and $\mu_i$ be the corresponding distribution curve.  Simple calculation yields that
	$$\ba{ll}\ad\sup_{t_0\leq t\leq T'}\BE|X_1(t)-X_2(t)|^2\\
	\ns\ad\q\leq 2T\BE\int_{0}^{T'}|a(t,X_1(t),\mu_1(t);u_1(t,X_1(t)))-a(t,X_2(t),\mu_2(t);u_2(t,X_2(t)))|^2dt\\
	\ns\ad\qq+2\BE\int_{0}^{T'}|b(t,X_1(t),\mu_n(t);u_1(t,X_1(t))-b(t,X_2(t),\mu(t);u_2(t,X_2(t))|^2dt\\
	\ns\ad\q\leq L\int_{0}^{T'}\(\BE|X_1(t)-X_2(t)|^2+(1+\BE|X_2(t)|^2)\e\)dt\\
	\ns\ad\q\leq L\int_{0}^{T'}\sup_{0\leq t\leq s }\BE|X_1(t)-X_2(t)|^2ds+L\e\(1+\sup_{0\leq t\leq {T}}\BE|X_2(t)|^2\).\ea$$
	Grownwall's inequality implies that for some $\beta_1>0$
	$$\sup_{0\leq t\leq T}\BE|X_1(t)-X_2(t)|^2\leq \beta_1T\e(1+\sup_{0\leq t\leq {T}}\BE|X(t)|^2).$$
 The desired assertion \eqref{Xnconvergence} then follows from  \eqref{holder} and the definition of $m$ in \eqref{eq-m-metric}.  \end{proof}

\section{Time-inconsistent Distribution-independent Control}\label{sec:mic}

In this section, we  briefly review the results on the time-inconsistent control problem in \cite{Yong2012b}. We need the following assumption.

\begin{assumption}
	\label{A-1} 
	{\rm (1)} Suppose there exist $a_1:[0,T]\times\dbR^d\times\sP_2(\dbR^d)\mapsto\dbR^d$, $a_2:[0,T]\times\dbR^d\times U\mapsto\dbR^d$,  $f_1:[0,T]\times[0,T]\times\dbR^d\times\sP_2(\dbR^d)\mapsto\dbR^d$, $f_2:[0,T]\times[0,T]\times\dbR^d\times U\mapsto\dbR^d$ such that $$a(t,x,\rho;u)=a_1(t,x,\rho)+a_2(t,x;u) \text{ and }f(\t,t,x,\rho;u)=f_1(\tau,t,x,\rho)+f_2(\tau,t,x;u).$$ 
	
	{\rm (2)}
	 There exists a  map $\psi:[0,T]\times\dbR^d\times\dbR^d\mapsto U$ such that
	\begin{equation}\label{defpsi}\psi(t,x,q):=\argmin_{v\in U}\left\{q\cdot  a_2(t,x;v)+f_2(t;t,x;v)\right\}\end{equation}
	with 
	$$\left\{\ba{ll}\ns\ad d^2_U(\psi(t,x,q),v_0)\leq \beta_\psi(1+|x|^2+|q|^2)\\ [2mm]
	\ad d_U(\psi(t,x_1,q_1),\psi(t,x_2,q_2))\leq \beta_\psi (|x_1-x_2|+|q_1-q_2|),
	\ea\right.$$ where $\beta_{\psi}$ is a positive constant.

\end{assumption}

The following example demonstrates that Assumption \ref{A-1} is not hard to verify in many situations.
\begin{example} 
	Let $ A(\cdot):[0,T]\times\dbR^d\mapsto\dbR^d$, $\varphi(\cdot):\dbR^d\mapsto\dbR^l$, $G(\t;t,\cdot),S(\t;\cdot):\dbR^d\mapsto\dbR$  $ B(\cdot)\in\dbR^{d\times l}, C(\cdot),D(\cdot)\in\dbR^{d\times l}$  and  $R(\cdot;\cdot)\in\dbR^{l\times l}$  be continuous.   Assume that $R\geq\lambda I$ for some $\lambda>0$,  $$a(t,x,\rho,u):= A(t,x)+2B(t)u+C\int_{\dbR^d}\varphi(t,y) \rho(dy)\q\text{and}\q b(t,x,\rho)=D\int_{\dbR^d}\varphi(t,y) \rho(dy)$$
	with 
	$$f(\t;t,x,u)=G(\t;t,x)+\langle u, R(\t;t)u\rangle\q\text{and}\q g(\t;x)=S(\t;x).$$
	Simple calculation yields that
	$$\psi(t,x,q)=-R(t;t)^{-1}B(t)'q.$$
	Then Assumption \ref{A-1} is satisfied.
\end{example}


Now let's present the time-inconsistent HJB equation  for time-inconsistent distribution-independent problems. The reader is referred to   \cite{Yong2012b} on   the derivation of such an equation via   an $N$-player game. 

We consider the following distribution-independent SDE with a priori $\mu\in\sM$,
\begin{equation}\label{SDE-3}dY(t)= a(t,Y(t),\mu(t);u(t))dt+ b(t,Y(t),\mu(t))dw(t)\end{equation}
with cost function 
\begin{equation}\label{cost-3}J^{\mu}(\t;t,y; u):=\BE_{t,y}\(\int_{t}^T f(\t;s,Y(s),\mu(s);u(s,Y(s)))ds+ g(\t;Y(T),\mu(T))\), \text{for }u\in\sU\end{equation}
and  value function
\begin{equation}\label{value-2}V^{\mu}(t,y; u):=J(t;t,y;u).\end{equation}

The  {\it equilibrium strategy}  is defined as
\begin{equation}\label{equistra} u(t,y)=\psi(t,y,D\Theta(t;t,y))\end{equation}
where $\Theta(\tau;t,y)$ is the  solution to the following time-inconsistent HJB equation (given $\mu$) 
\begin{equation}\label{hjb-equi}\left\{\ba{ll}\ad \Theta_t(\t;t,y)+\frac12\text{Tr}[b(t,y,\mu(t))b'(t,y,\mu(t))D^2\Theta(\t;t,y)]\\
\ns\ad~+a\big(t,y,\mu(t);\psi(t,y,D\Theta(t;t,y))\big)\cdot D\Theta(\t;t,y)+f(\tau;t,y,\mu(t);\psi(t,y,D\Theta(t;t,y)))=0;\\
\ns\ad \Theta(\t;t,y)=g(\t;y,\mu(T)).\ea\right.\end{equation}
Here $u$ is independent of $\mu$  due to Assumption \ref{A-1}.

Under some appropriate conditions, it is shown  in \cite{Yong2012b}  that there exists a unique solution of \eqref{hjb-equi} whose first-order  derivative is Lipschitz.   Therefore  the  strategy  $ u\in\sU$ is well defined by \eqref{equistra}.
%
From  the previous arguments,   we  define a map $\cT_2:\sM\mapsto\sU$ by
$$\cT_2(\mu):=u$$
where $ u$ is defined in \eqref{equistra} through solving the HJB equation \eqref{hjb-equi}. From \cite{MeiYong2019,Wei2017}, we know that  $u=\cT_2(\mu)$ verifies the following local optimality  condition: 
\begin{proposition} \label{prooptimal} For   $u= \cT_2(\mu)$, we have 
	$$\limsup_{\e\rightarrow0^+}\frac1\e\(J^{\mu}(t;t,x; u)-J^{\mu}(t;t,x; u^\e\oplus u|_{[t+\e,T]})\)\leq 0$$
	for any $ u^\e\in L^2_\dbF([t,t+\e),U)$, where $J^{\mu}$ is defined in \eqref{cost-3}.
\end{proposition}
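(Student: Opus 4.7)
My plan is to identify $J^\mu(t;t,x;u)$ with the HJB value $\Theta(t;t,x)$, derive an exact formula for the perturbed cost via It\^o's formula on the short interval $[t,t+\e]$, and conclude by the pointwise minimizer property of $\psi$ at $s=t$.

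First, I will represent the equilibrium cost. Applying It\^o's formula to $\Theta(t;s,Y(s))$ along the solution of \eqref{SDE-3} under $u=\cT_2(\mu)$ on $[t,T]$ and substituting the HJB equation \eqref{hjb-equi} at $\tau=t$, the standard verification argument gives $J^\mu(t;t,x;u)=\Theta(t;t,x)$. The same reasoning on $[t+\e,T]$ under the equilibrium control, conditional on $\cF_{t+\e}$, produces
\begin{equation*}
J^\mu(t;t,x;\hat u) = \BE\bigg[\int_t^{t+\e} f(t;s,\hat X(s),\mu(s);u^\e(s))\,ds + \Theta(t;t+\e,\hat X(t+\e))\bigg],
\end{equation*}
where $\hat X$ denotes the state driven by $\hat u := u^\e\oplus u|_{[t+\e,T]}$.

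Next, I will apply It\^o to $\Theta(t;\cdot,\cdot)$ along $\hat X$ on $[t,t+\e]$ and substitute \eqref{hjb-equi} at $\tau=t$ to eliminate $\Theta_s+\tfrac12\text{Tr}(bb'D^2\Theta)$. Using the splitting $a=a_1+a_2$ and $f=f_1+f_2$ from Assumption \ref{A-1}(1), the $a_1,f_1$ contributions cancel, leaving
\begin{equation*}
J^\mu(t;t,x;\hat u)-J^\mu(t;t,x;u) = \BE\int_t^{t+\e}\Delta(s,\hat X(s);u^\e(s))\,ds,
\end{equation*}
where, with $\psi_*(s,y):=\psi(s,y,D\Theta(s;s,y))$,
\begin{equation*}
\Delta(s,y;v) := [a_2(s,y;v)-a_2(s,y;\psi_*(s,y))]\cdot D\Theta(t;s,y) + f_2(t;s,y;v) - f_2(t;s,y;\psi_*(s,y)).
\end{equation*}

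Finally, I will exploit the minimizer identity \eqref{defpsi}. Setting $s=t$ (so that $\tau=s=t$) with $q=D\Theta(t;t,y)$, the definition of $\psi$ forces $\Delta(t,y;v)\geq 0$ for every $v\in U$ and every $y\in\dbR^d$. Writing $\Delta(s,\hat X(s);v)=\Delta(t,\hat X(s);v)+R(s,\hat X(s);v)$, the remainder $R$ collects differences such as $f_2(t;s,\cdot;\cdot)-f_2(t;t,\cdot;\cdot)$ and $[D\Theta(t;s,\cdot)-D\Theta(t;t,\cdot)]\cdot a_2$, each of order $|s-t|$ thanks to the regularity of $\Theta(t;\cdot,\cdot)$ from \cite{Yong2012b} and the continuity of $a_2,f_2$. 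Combining this with the $L^2$-estimate \eqref{holder} applied to $\hat X$ yields $\e^{-1}\BE\int_t^{t+\e}R(s,\hat X(s);u^\e(s))\,ds\to 0$ as $\e\to 0^+$; since $\Delta(t,\hat X(s);u^\e(s))\geq 0$, one obtains $\liminf_{\e\to 0^+}\e^{-1}[J^\mu(t;t,x;\hat u)-J^\mu(t;t,x;u)] \geq 0$, which is equivalent to the claim. The main obstacle will be to make the $O(|s-t|)$ remainder estimates uniform in the perturbation $u^\e$, since no $\e$-uniform $L^2$-bound on $u^\e$ is assumed; this is handled by a standard localization to bounded perturbations, together with density of $L^\infty$-strategies in $L^2_\dbF([t,t+\e),U)$ and stability of the $\limsup$ inequality under approximation.
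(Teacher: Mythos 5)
The paper does not actually prove Proposition \ref{prooptimal}; it is quoted from \cite{MeiYong2019,Wei2017}. Your verification argument is exactly the route taken in those references, and its skeleton is sound: for fixed $\tau=t$ the equation \eqref{hjb-equi} is a linear parabolic PDE in $(s,y)$ (the feedback being frozen along the diagonal), so Feynman--Kac gives $J^{\mu}(t;\cdot,\cdot;u)=\Theta(t;\cdot,\cdot)$; It\^o on $[t,t+\e]$ plus the substitution of \eqref{hjb-equi} at $\tau=t$ reduces the cost difference to $\BE\int_t^{t+\e}\Delta\,ds$ with the $a_1,f_1$ parts cancelling under Assumption \ref{A-1}(1); and the argmin property \eqref{defpsi} evaluated at $s=t$ with $q=D\Theta(t;t,y)$ gives $\Delta(t,y;v)\ge 0$. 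The sign bookkeeping at the end ($\liminf\ge 0$ for the reversed difference) is also correct.

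The one genuine gap is the remainder estimate, and you have correctly identified where it lives but not closed it. The bound you need is $\e^{-1}\BE\int_t^{t+\e}|R(s,\hat X(s);u^\e(s))|\,ds\to 0$, but $R$ contains terms linear in $d_U(u^\e(s),v_0)$ (through $a_2(s,y;v)-a_2(t,y;v)$ and $[D\Theta(t;s,y)-D\Theta(t;t,y)]\cdot a_2(s,y;v)$), so Cauchy--Schwarz only yields $o(1)\cdot\big(1+\e^{-1/2}\Vert d_U(u^\e,v_0)\Vert_{L^2([t,t+\e])}\big)$, which need not vanish for an arbitrary $L^2$ family $\{u^\e\}$. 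Your proposed repair by ``density of $L^\infty$-strategies and stability of the $\limsup$ under approximation'' does not obviously work: the inequality is asserted for a fixed \emph{family} $\{u^\e\}_{\e>0}$, and replacing each $u^\e$ by a truncation perturbs both $\hat X$ and the running cost by amounts that must be shown to be $o(\e)$, which is not automatic. To close this you need either (i) a coercivity of $v\mapsto f_2(t;t,x;v)$ near $\psi$ so that the nonnegative term $\Delta(t,\hat X(s);u^\e(s))$ dominates the linear-in-$v$ remainder (this is what happens in the LQ setting), or (ii) an a priori restriction of the perturbation class to uniformly square-integrable (or bounded) $u^\e$, which is effectively what \cite{Wei2017} does. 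As written, the proposition's hypothesis ``any $u^\e\in L^2_{\dbF}([t,t+\e),U)$'' requires one of these additional inputs for your argument to go through.
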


\section{Equilibrium}\label{sec-equi}
We are now ready to define an  {\it   equilibrium} for 
the time-inconsistent distribution-dependent problem.
\begin{definition}\label{def-optimalcurve} 
		$\mu^\star\in\sM_\gamma$ is called an {\it   equilibrium} if 
		$$\cT^\gamma_1\circ\cT_2(\mu^\star)=\mu^\star\text{ with } \mu^\star(0)=\gamma.$$
	In addition, 	$u^\star=\cT_2(\mu^\star)$ is called an {\it equilibrium strategy}.
\end{definition}	
Definition \ref{def-optimalcurve} consists of two parts. The first part requires that the distribution curve is the solution under   the corresponding closed-loop strategy. The second part requires   that under the a priori distribution curve, the strategy $u^\star$  is a time-inconsistent  strategy which has been defined in \cite{Yong2012b} and thus verifies the local-optimality  in Proposition \ref{prooptimal}.

The following proposition can be derived directly from Definition \ref{def-optimalcurve}.
\begin{proposition} 
	{\rm (1)} If $\mu^\star$ is an equilibrium,
	$\mu^\star_{[t_1, T]}$ is also an equilibrium with initial $(t_1,\gamma_1)\in\text{\rm Graph}(\mu^\star)$ for any $t_1\in[0,T]$.\ss

	{\rm (2)} For any equilibrium $\mu^\star$ with corresponding strategy $u^\star=\cT_2(\mu^\star)$,
	$$\dbJ(\t;t,\rho;  u^\star)=\int_{\dbR^d} \Theta(\t;t,x)\rho(dx),\q\text{for any }(s,\rho)\in\text{\rm Graph}(\mu),$$
where $\Theta(\tau;s,x)$ is the solution of \eqref{hjb-equi} given $\mu^\star$.

\end{proposition}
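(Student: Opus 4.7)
The plan is to prove the two parts in sequence, exploiting the backward structure of the HJB equation for part (1) and an It\^o-based verification argument for part (2).

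For part (1), I would show that both maps $\cT^{\gamma_1}_1$ and $\cT_2$ commute with restriction to $[t_1,T]$. The HJB equation \eqref{hjb-equi} is solved backward from the terminal condition $\Theta(\tau;T,y)=g(\tau;y,\mu^\star(T))$, so its solution on $[t_1,T]$ depends on $\mu^\star$ only through its values on $[t_1,T]$; hence $\cT_2(\mu^\star|_{[t_1,T]})=u^\star|_{[t_1,T]}$. For $\cT^{\gamma_1}_1$, if $X$ is the solution of \eqref{gen-SDE} on $[0,T]$ under $u^\star$ with $\text{LAW}(X)=\mu^\star$, then $X|_{[t_1,T]}$ satisfies the same SDE on $[t_1,T]$ with initial random variable $X(t_1)$ of distribution $\gamma_1=\mu^\star(t_1)$, and its distribution curve is $\mu^\star|_{[t_1,T]}$. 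Uniqueness of the distribution curve among solutions with the given initial distribution (a consequence of Lemma \ref{lawXM}(1) together with the Lipschitz estimates in its proof) then forces $\cT^{\gamma_1}_1(u^\star|_{[t_1,T]})=\mu^\star|_{[t_1,T]}$, giving the desired fixed-point identity on $[t_1,T]$.

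For part (2), fix $(t,\rho)\in\text{Graph}(\mu^\star)$ and let $X$ be the solution of \eqref{gen-SDE} on $[t,T]$ under $u^\star$ with $X(t)\sim\rho$. By part (1), $\text{law}(X(s))=\mu^\star(s)$ for all $s\in[t,T]$, so along this trajectory the distribution-dependent coefficients and cost functional coincide with the distribution-independent quantities appearing in the HJB \eqref{hjb-equi} associated with $\mu^\star$. The regularity results cited from \cite{Yong2012b} guarantee that $\Theta(\tau;\cdot,\cdot)$ is $C^{1,2}$ with Lipschitz spatial gradient. I would then apply It\^o's formula to $s\mapsto\Theta(\tau;s,X(s))$ on $[t,T]$ and substitute the HJB identity at the equilibrium feedback $u^\star(s,y)=\psi(s,y,D\Theta(s;s,y))$; the drift cancels against $-f(\tau;s,X(s),\mu^\star(s);u^\star(s,X(s)))$, and the stochastic integral is a true martingale because Lemma \ref{lawXM}(2) provides the moment bounds needed to dominate the at-most-linear growth of $D\Theta$. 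Taking expectations and using the terminal condition yields
\[
\BE[\Theta(\tau;t,X(t))] = \BE\bigg[\int_t^T f(\tau;s,X(s),\mu^\star(s);u^\star(s,X(s)))\,ds + g(\tau;X(T),\mu^\star(T))\bigg],
\]
and since $X(t)\sim\rho$ the left-hand side equals $\int_{\dbR^d}\Theta(\tau;t,x)\rho(dx)$ while the right-hand side is exactly $\dbJ(\tau;t,\rho;u^\star)$.

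The main obstacle I anticipate is a subtle distinction in part (1) between pathwise uniqueness and uniqueness of the distribution curve: Lemma \ref{lawXM}(1) is stated for a fixed initial random variable $\xi\in\cF_{t_0}$, whereas the equilibrium condition specifies only an initial distribution. To bridge this I would observe that the Picard iteration used in the proof of Lemma \ref{lawXM}(1) interacts with $\xi$ only through $\text{law}(\xi)$ (since both the coefficients and the iterated distribution $\mu_n$ are distribution-dependent only), so any two solutions sharing an initial distribution necessarily generate the same distribution curve. Once that point is settled, both parts reduce to routine applications of the SDE flow property and It\^o's calculus.
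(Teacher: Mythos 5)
The paper states this proposition without proof, remarking only that it ``can be derived directly from Definition \ref{def-optimalcurve}''; your argument supplies the standard details the authors evidently intended --- the backward (restriction-invariance) structure of the HJB equation \eqref{hjb-equi} together with uniqueness in law of the McKean--Vlasov SDE for part (1), and an It\^o/verification computation using the terminal condition for part (2) --- and it is correct. The one genuinely delicate point, which you rightly isolate, is that uniqueness of the distribution curve must depend on the initial datum only through its law; your Picard-iteration remark resolves this, provided one also notes that $X(t_1)$, being $\cF_{t_1}$-measurable, is independent of the Brownian increments after $t_1$ (true here since $W$ is an $\dbF$-Brownian motion), so that the law of each iterate is indeed determined by $\gamma_1$ alone.
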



\subsection{Existence and Uniqueness of  Equilibrium}\label{sec-equi-existence}
In this subsection,  we  focus on the existence  and uniqueness of the equilibrium for the  time-inconsistent distribution-dependent problem. Obviously, the goal is   to find a fixed point for $\cT^\gamma_1\circ\cT_2$ on $\sM_\gamma$.

  To guarantee the continuity $\cT^\gamma_1\circ\cT_2$ on $\sM$, we require some well-posdeness results of the time-inconsistent HJB equation \eqref{hjb-equi}.
For simplicity, we write 
$$\Ba(t,x,\rho;p)=a(t,x,\rho;\psi(t,x,p))\quad \text{ and }\quad\Bf(\t;t,x,\rho;p)=f(\t;t,x,\rho;\psi(t,x,p)).$$
 Then \eqref{hjb-equi} can be written as
\begin{equation}\label{hjb-equi-1}\left\{\ba{ll}\ad \Theta_t(\t;t,x)+\frac12\text{Tr}[b(t,x,\mu(t))b'(t,x,\mu(t))D^2\Theta(\t;t,x)]\\
\ns\ad\qq+\Ba(t,x,\mu(t);D\Theta(t;t,x))\cdot D\Theta(\t;t,x)+\Bf(\tau;t,x,\mu(t);D\Theta(t;t,x))=0;\\[2mm]
\ns\ad \Theta(\t;t,x)=g(\t;x,\mu(T)).\ea\right.\end{equation}

To prove our main result, we assume the following assumption.
\begin{assumption}\label{A-2} 
		{\rm (a)} For any $\mu\in \sM_\gamma$, there exists  a unique classical solution $\Theta(\t;t,x)$ of \eqref{hjb-equi-1} with  constants $\beta_\Theta^0,\beta_\Theta^1>0$ (independent of $\mu\in\sM_\gamma$) such that
	\bel{theta21bound}\vert D^2\Theta(t;t,x)\vert\leq \beta_\Theta^0\q\text{and}\q |D \Theta(t;t,0)|\leq \beta_\Theta^1
	.\eel
	
	{\rm (b)} Let $\Theta_i(\t;t,x)$ be the solutions  of \eqref{hjb-equi} corresponding to $\mu_i$ for $i=1,2$. There exists a constant $\beta^3_\Theta>0$ such that 
	\begin{equation}\label{nablaW}|D \Theta_1(t;t,x)-D\Theta_2(t;t,x)|\leq \beta^3_\Theta(1+|x|)m(\mu_1,\mu_2).\end{equation}
	%
	
\end{assumption}

Now we are ready to present our main theorem for general Mckean-Vlasov diffusions.
\begin{theorem}\label{mainthm-0} Suppose Assumptions \ref{A-0}, \ref{A-1}, and \ref{A-2} hold. 
	\begin{itemize}
  \item[{\rm (1)}] If $\gamma\in\sP_2(\dbR^d)$ and $\int_{\dbR^d}|x|^{2+\d}\gamma(dx)<\infty$ for some $\d>0$, there exists an   equilibrium.
  \item[{\rm (2)}] If $\gamma\in\sP_2(\dbR^d)$  and $\beta_1\beta_\alpha\beta_\psi\beta^3_\Theta T(1+\int_{\dbR^d}|x|^2\gamma(dx))<1$, where $\beta_\alpha$ is some appropriate constant depending only on $\beta^1_\Theta$ and $\beta_\Theta^2$ to be  defined later, then there exists a unique equilibrium. 

\end{itemize}

\end{theorem}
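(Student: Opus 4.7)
The plan is to reformulate Definition \ref{def-optimalcurve} as a fixed point problem for the composition $\Phi := \cT^\gamma_1 \circ \cT_2 : \sM_\gamma \to \sM_\gamma$, and then to apply Schauder's theorem for Part (1) and Banach's contraction principle for Part (2). Two uniform estimates drive both arguments. Since Assumption \ref{A-2}(a) gives $|D\Theta(t;t,x) - D\Theta(t;t,y)| \leq \beta_\Theta^0|x-y|$, the Lipschitz property of $\psi$ from Assumption \ref{A-1}(2) yields
$$d_U\bigl(\cT_2(\mu)(t,x),\cT_2(\mu)(t,y)\bigr) \leq \beta_\psi(1+\beta_\Theta^0)|x-y|,$$
so $\cT_2(\mu) \in \sU_\alpha$ with $\alpha := \beta_\psi(1+\beta_\Theta^0)$ \emph{independent} of $\mu$. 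The associated constant $\beta_\alpha$ from Lemmas \ref{lawXM}--\ref{contiuityofsolution} is the one named in the theorem statement. Separately, Assumption \ref{A-2}(b) combined with the Lipschitz property of $\psi$ in its last argument produces the $\mu$-sensitivity bound
$$d_U\bigl(\cT_2(\mu_1)(t,x),\cT_2(\mu_2)(t,x)\bigr) \leq \beta_\psi \beta_\Theta^3(1+|x|)\, m(\mu_1,\mu_2).$$

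For Part (1), since $\gamma$ has finite $(2+\d)$ moment, Lemma \ref{lawXM}(3) supplies constants $C := C_{\alpha,\gamma,\d}$ and $\lambda := \lambda_{\alpha,\gamma}$ such that $\Phi(\sM_\gamma) \subset \sM_\gamma^{\d,C,\lambda}$; in particular $\Phi$ maps the compact convex set $\sM_\gamma^{\d,C,\lambda}$ into itself. Continuity of $\Phi$ on this set follows by feeding the second displayed bound into Lemma \ref{contiuityofsolution} with $\e := \beta_\psi \beta_\Theta^3 m(\mu_n,\mu)$: whenever $m(\mu_n,\mu) \to 0$, the corresponding strategies both lie in the same $\sU_\alpha$, so $m(\Phi(\mu_n),\Phi(\mu)) \to 0$. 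Schauder's theorem then delivers an equilibrium.

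For Part (2), the same choice $\e := \beta_\psi\beta_\Theta^3 m(\mu_1,\mu_2)$ in Lemma \ref{contiuityofsolution} upgrades continuity to the global Lipschitz estimate
$$m(\Phi(\mu_1),\Phi(\mu_2)) \leq \beta_1\beta_\alpha\beta_\psi\beta_\Theta^3 T\Bigl(1+\int_{\dbR^d}|x|^2\gamma(dx)\Bigr)m(\mu_1,\mu_2).$$
Under the hypothesized smallness condition the prefactor is strictly less than one, and since $(\sM_\gamma,m)$ is complete (closed in the complete space $(\sM,m)$), Banach's contraction mapping theorem produces the unique equilibrium.

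The main obstacle I anticipate is precisely the packaging of $\mu$-uniformity: Lemmas \ref{lawXM} and \ref{contiuityofsolution} advertise $C$, $\lambda$, and $\beta_\alpha$ as depending on the Lipschitz parameter $\alpha$ of the strategy, while $\cT_2(\mu)$ is built from $D\Theta$, which a priori depends on $\mu$. The $\mu$-independent bound $|D^2\Theta|\le\beta_\Theta^0$ in Assumption \ref{A-2}(a) is what produces a single $\alpha$ valid for all $\mu\in\sM_\gamma$; without it one would be forced to iterate through nested subsets and the clean applications of Schauder and Banach above would break down.
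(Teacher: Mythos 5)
Your proposal is correct and follows essentially the same route as the paper's own proof: both reduce the problem to a fixed point of $\cT^\gamma_1\circ\cT_2$, use Assumption \ref{A-2}(a) to obtain a single $\mu$-independent Lipschitz class $\sU_\alpha$ (hence uniform constants $C$, $\lambda$, $\beta_\alpha$), combine Assumption \ref{A-2}(b) with the Lipschitz property of $\psi$ to get the strategy-sensitivity bound, and then feed this into Lemma \ref{contiuityofsolution} to invoke Schauder on the compact convex set $\sM_\gamma^{\d,C,\lambda}$ for Part (1) and the contraction principle on $\sM_\gamma$ for Part (2). Your explicit identification of $\alpha=\beta_\psi(1+\beta_\Theta^0)$ merely makes precise a step the paper leaves implicit.
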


\begin{proof} 	For any given $\mu\in\sM_\gamma$, write
	$u=\cT_2(\mu)$, i.e., $u(t,x)=\psi(t,x,D\Theta(t;t,x))$.  By \eqref{theta21bound},  we have
	$$|D\Theta(t;t,x)|\leq \beta_\Theta^0|x|+\beta_\Theta^1.$$
	Then   the SDE \eqref{SDE-3} becomes
	$$dX(t)=a(t,X(t),\text{\rm law}(X(t)),\psi(t,X(t),D\Theta(t;t,X(t))))dt+b(t,X(t),\text{ \rm law}(X(t)))dW(t)$$
	with initial $X(0)=\xi$ whose distribution law is $\gamma$.
	By Assumptions \ref{A-0} and  \ref{A-1}, similar to \eqref{ubounsupX}, there exist constants $\beta_\alpha$ and $\beta_{\alpha,\d}$ independent of $\mu$ such that 
	
	$$\BE^u_{t,\xi}\sup_{0\leq t\leq T}|X(t)|^2\leq \beta_\alpha(1+\BE|\xi|^2), 
	\quad  \BE^u_{t,\xi}\sup_{0\leq t\leq T}|X(t)|^{2+\d}\leq \beta_{\alpha,\d}(1+\BE|\xi|^{2+\d}),$$
	and 
	$$\BE^u_{t,\xi}|X(t)-X(s)|^2\leq \beta_\alpha(1+\BE|\xi|^2)|t-s|.$$

	Given any $\mu_1,\mu_2\in\sM_\gamma$,
	write $u_1=\cT_2(\mu_1)$ and $u_2=\cT_2(\mu_2)$. By  \eqref{nablaW} and Assumption \ref{A-1}, it follows that
	$$|u_1(t,x)-u_2(t,x)|\leq \beta_\psi\beta^3_\Theta(1+|x|)m(\mu_1,\mu_2).$$
	As a consequence of \eqref{Xnconvergence}
	\bel{ctctcontra}m(\cT^\gamma_1\circ\cT_2(\mu_1),\cT^\gamma_1\circ\cT_2(\mu_1))\leq\beta_1\beta_\alpha\beta_\psi\beta^3_\Theta Tm(\mu_1,\mu_2)(1+\BE|\xi|^2).\eel

	(1)  If $\gamma$ has a finite $(2+\d)$th moment,
	by Lemma \ref{lawXM},  $\text{\rm LAW}(X)\in\sM^{\d,C,\lambda}_\gamma$ for some $C,\lambda>0$ which are independent of $\mu$. This verifies that 
	$$\cT^\gamma_1\circ\cT_2(\sM^{\d,C,\lambda}_\gamma)\subset\sM^{\d,C,\lambda}_\gamma.$$ Note that 
	$\sM_\gamma^{\d,C,\lambda}$ is a compact and convex set under $m$. In addition, thanks to \eqref{ctctcontra}, $\cT_1^\gamma\circ\cT_2$ is a continuous map. Consequently we can use  Schauder's fixed point theorem to conclude that  there exists at least a $\mu^*$ which is a fixed point of $\cT^\gamma_1\circ\cT_2$. Then $\mu^*$ is the required equilibrium.
	
	(2) If $\beta_1\beta_\alpha\beta_\psi\beta^3_\Theta T(1+\int_{\dbR^d}|x|^2\gamma(dx))<1$, \eqref{ctctcontra} concludes  that $\cT^\gamma_1\circ\cT_2$  is a contraction on $\sM_\gamma$.  Thus there exists a unique  equilibrium in $\sM_\gamma$.
	\end{proof}
\begin{remark}
	\label{remarkproof}
  (1) One can see that our result heavily relies on Assumption \ref{A-2}, which is not a general assumption. We will verify it under some general assumptions later.

	 (1) If $\beta^3_\Theta=0$,  the uniqueness holds directly. A sufficient condition for this case is that $a,f,g$ are independent of the distribution term which reduces to  the time-inconsistent distribution-independent problems investigated in \cite{Yong2012b}. Thus our results generalizes the problem solved there.

\end{remark}

\subsection{Verification of Assumption \ref{A-2}}
In the subsection, we  present a sufficient condition for   Assumption \ref{A-2}. The following  is the  assumption required.
\begin{assumption} \label{B-2}{\rm (1)}  $b$ is independent of $\rho$ and  $b:[0,T]\times\dbR^d$ is continuous with respect to $t$ and has bounded continuous first and second order derivatives with respect to $x$ and there exists a $\lambda>1$ such that
	$$\lambda^{-1}|y|^2\leq y' b(t,x)b' (t,x)y\leq \lambda |y|^2.$$
	
	{\rm (2)}   $a:[0,T]\times\dbR^d\times\sP_2(\dbR^d)\times U\mapsto \dbR^d$, $f:[0,T]\times[0,T]\times\dbR^d\times\sP_2(\dbR^d)\times U\mapsto \dbR$ and $g:[0,T]\times[0,T]\times\dbR^d\times\sP_2(\dbR^d)\mapsto \dbR$ are   continuous and bounded with 
	$$\ba{ll}\ad|a_x(t,x,\rho,u)|+|a_u(t,x,\rho,u)|+|f_x(\t;t,x,\rho,u)|+|g_x(\t;t,x,\rho)|+|g_{xx}(\t;t,x,\rho)|\leq K,\ea$$
	and 
	$$\ba{ll}\ad|a(t,x,\rho_1,u_1)-a(t,x,\rho_2,u_2)|+|f(\t;t,x,\rho_1,u_1)-f(\t;t,x,\rho_2,u_2)|+|g(\t,x,\rho_1)-g(\t,x,\rho_2)|\\
	\ns\ad\q\leq K(d_U(u_1,u_2)+w(\rho_1,\rho_2)).\ea$$
	
\end{assumption}


To proceed, we introduce the  following notations.
For $\b\in(0,1)$, let $C^\b(\dbR^d)$ be the space of function $\f:\dbR^d\to\dbR$ such that $x\mapsto\f(x)$ is
continuous, and
$$\|\f\|_\b:=\|\f\|_0+[\f]_\b<\infty,$$
where
$$\|\f\|_0=\sup_{(x)\in\dbR^d}|\f(x)|,\q[\f]_\b=\sup_{x\ne y}{|\f(x)-\f(y)|\over|x-y|^\b}.$$
Further let $C^{1+\b}(\dbR^d)$ and $C^{2+\b}(\dbR^d)$ be the space of functions $\f:\dbR^d\to\dbR$ such that
$$\|\f\|_{1+\b}=\|\f\|_0+\|\f_x\|_0+[\f_x]_\b<\infty,$$
and
$$\|\f\|_{2+\b}=\|\f\|_0+\|\f_x\|_0+\|\f_{xx}\|_0+[\f_{xx}]_\b<\infty,$$
respectively. Also let $L^\infty(0,T;C^\beta(\dbR^d))$ be the set of all measurable functions $f:[0,T]\times\dbR^d\to\dbR$ such that for fixed $t\in[0,T]$, $f(t,\cd\,)\in C^\b(\dbR^d)$ with
$$\|f(\cd\,,\cd)\|_{L^\infty(0,T;C^\b(\dbR^d ))}=\esssup_{t\in[0,T]}\|f(t,\cd)\|_\b<\infty.$$
Let $C([0,T];C^\b(\dbR^d))$ be the family of continuous functions in $L^\infty(0,T;C^\b(\dbR^d))$. Similarly, we can define
$C([0,T];C^{k+\b}(\dbR^d))\subset L^\infty(0,T;C^{k+\b}(\dbR^d))$.

Then, 
 given any $\mu\in\sM$, we write
$$\left\{\ba{ll}\ad\Ba(t,x,q)=a(t,x,\mu(t);\psi(t,x,q)),\\
\ns\ad \Bb(t,x)=b(t,x),\\
 \ns\ad\Bf(\t;t,x,q)=f(\t;t,x,\mu(t);\psi(t,x,q)),\\
\ns\ad \Bg(\t;x)=g(\t;x,\mu(t).\ea\right.$$
One can see that if $\mu\in\sM$,
$\Ba(t,x,q)$ and $\Bf(\tau;t,x,q)$ are continuous with respect to $t$.

Consider
the following HJB equation,
\begin{equation}\label{HJB-generalcase}\left\{\ba{ll}
\ad  \Theta_t(\t;t,x)+D\Theta(\t;t,x)\cdot\Ba(t,x;D\Theta(t;t,x))+ \Bf(\t;t,x,D \Theta(t;t,x))\\
\ns\ad\q+\frac12\text{Tr}\[\Bb(t,x)\Bb'(t,x)D^2\Theta(\t;t,x)\] =0;\\
\ns\ad \Theta(\t;T,x)=\Bg(\t;x).\ea\right.\end{equation}

\begin{lemma}Under Assumption \ref{B-2}, there exists some constant $K$ which is independent of $\mu\in\sM_\gamma$ such that  the following assertions are true for the classical solution of \eqref{HJB-generalcase}:
	\begin{itemize}
  \item 
  [{\rm (1)}] it holds true that 
	$$|D \Theta(t;t,0)|+| D^2\Theta(t;t,x)|\leq K ;$$
	
	\item[{\rm (2)}] let $\Theta_i(\t;t,x)$ be the solution of of \eqref{HJB-generalcase} corresponding to $\mu_i$, for $i=1,2$. Then
	\begin{equation}\label{nablaW-2}\ba{ll}\ad|D \Theta_1(t;t,x)-D \Theta_2(t;t,x)|\leq Km(\mu_1,\mu_2).\ea\end{equation}
	\end{itemize}
	As a result, Assumption \ref{A-2} holds.
\end{lemma}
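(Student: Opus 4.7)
The plan is to view \eqref{HJB-generalcase} as a family, parametrized by $\tau$, of linear backward parabolic equations whose coefficients are ``frozen'' through the diagonal $V(t,x):=\Theta(t;t,x)$, and to extract uniform Schauder estimates that are insensitive to $\mu$.

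For part (1), I would first note that under Assumption \ref{B-2}, combined with the Lipschitz bound on $\psi$ from Assumption \ref{A-1}, the reduced coefficients $\Ba(t,x;q),\Bf(\tau;t,x;q)$ and the terminal datum $\Bg(\tau;x)$ are bounded uniformly in $\mu\in\sM_\gamma$, with moduli depending only on the universal constants $\lambda,K,\beta_\psi$. Since $\Bb\Bb^\top$ is uniformly elliptic and $C^2$-smooth in $x$, for each fixed $\tau$ the function $\Theta(\tau;\cdot,\cdot)$ satisfies a backward linear parabolic equation with H\"older-continuous bounded coefficients and a $C^{2+\beta}$ terminal datum. Classical parabolic Schauder theory (e.g.\ Ladyzenskaja--Solonnikov--Ural'ceva, or Krylov's interior estimates) then delivers
\[
\|\Theta(\tau;\cdot,\cdot)\|_{C^{1+\beta/2,\,2+\beta}([0,T]\times\R^d)}\le K,
\]
uniformly in $\tau$ and in $\mu$. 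Restricting to $\tau=t$ produces the desired bounds $|D\Theta(t;t,0)|+|D^2\Theta(t;t,x)|\le K$.

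For part (2), set $W:=\Theta_1-\Theta_2$. Subtracting the two equations, using the Lipschitz continuity in $\rho$ of $a,f,g$ from Assumption \ref{B-2} (so that the differences of $\Ba_i,\Bf_i,\Bg_i$ are bounded by a constant times $w(\mu_1(t),\mu_2(t))\le m(\mu_1,\mu_2)$), together with the a priori bound on $D^2\Theta_i$ from (1) used to linearize $\Ba_2(t,x;D\Theta_1)-\Ba_2(t,x;D\Theta_2)$, one checks that $W$ solves a linear backward parabolic PDE of the form
\[
W_t+\tfrac12\text{Tr}[\Bb\Bb^\top D^2W]+\cA(t,x)\cdot DW+\cB(t,x)W=R(\tau;t,x),\q W(\tau;T,x)=\Phi(\tau;x),
\]
with bounded coefficients $\cA,\cB$ and data satisfying $\|R\|_0+\|\Phi\|_{C^{1+\beta}}\le K\,m(\mu_1,\mu_2)$. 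One more application of Schauder estimates to this linear equation yields $\|DW(\tau;t,\cdot)\|_0\le K\,m(\mu_1,\mu_2)$; evaluating at $\tau=t$ delivers \eqref{nablaW-2}.

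The main obstacle is that $\Ba(t,x;D\Theta(t;t,x))$ couples the one-parameter family $\{\Theta(\tau;\cdot,\cdot)\}_\tau$ through the diagonal $\tau=t$, which is the distinctive nonstandard feature of the time-inconsistent HJB and precludes a direct nonlinear Schauder argument. The construction of a classical solution in the presence of this coupling is carried out in \cite{Yong2012b} via a fixed-point/bootstrap scheme; my plan is to import that scheme under Assumption \ref{B-2} and verify inductively that every estimate produced depends only on the universal constants $\lambda,K,\beta_\psi$, never on $\mu$. This $\mu$-independence is precisely what is needed to confirm Assumption \ref{A-2}, with $\beta_\Theta^0,\beta_\Theta^1,\beta_\Theta^3$ all absorbed into a single constant $K$.
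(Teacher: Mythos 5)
Your proposal is correct in its overall architecture and it correctly isolates the genuine difficulty --- the diagonal coupling through $D\Theta(t;t,x)$, which forces a fixed-point construction before any linear estimate can be applied --- but the technical route differs from the paper's. You propose to obtain the uniform bounds from classical Schauder theory applied to the frozen linear equations, and to estimate $W=\Theta_1-\Theta_2$ by writing a linear parabolic PDE for it; the paper instead works entirely with the explicit fundamental solution $\Gamma(t,x;s,y)$ of $\partial_t\Theta+\frac12\mathrm{Tr}(\Sigma D^2\Theta)=0$, its Gaussian derivative bounds, and the integration-by-parts identity $\Gamma_y=-\Gamma_x+\Gamma\theta$, deriving every estimate from the Duhamel representation plus Gronwall with the $(s-t)^{-1/2}$ singularity, and carrying out the fixed point explicitly as a contraction on $C([T-\d,T];C^1(\R^d))$ iterated over small subintervals (rather than citing the scheme of \cite{Yong2012b}). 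The parametrix route buys two things you should be aware of: first, it only requires $L^\infty$ bounds on $g$, $Dg$, $D^2g$ and Lipschitz continuity of the coefficients, whereas a genuine $C^{1+\beta/2,2+\beta}$ Schauder estimate up to the terminal time needs $[\Bg_{xx}]_\beta<\infty$ and H\"older continuity in $t$ of the diagonal coefficient $\Ba(t,x;D\Theta(t;t,x))$, neither of which is literally supplied by Assumption \ref{B-2} (so your part (1) as stated slightly overclaims, though the weaker conclusion $|D^2\Theta|\le K$ that you actually need can be recovered); second, the explicit kernel makes the $\mu$-independence of every constant transparent, which is the whole point of the lemma. Your difference argument for part (2) is equivalent in substance to the paper's, which estimates $D\Theta_1-D\Theta_2$ directly from the representation formula, uses the Lipschitz-in-$\rho$ hypotheses to bound the coefficient differences by $m(\mu_1,\mu_2)$, and closes with Gronwall.
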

\begin{proof} Throughout the proof, $L$ is a generic positive  constant,  independent of $\mu$, whose exact value may change   from line  to line. 
	
Write $\Sigma(s,x):=b(s,x)b'(s,x)$.	Let $\Gamma(t,x;s,y)$ be the fundamental solution of the heat equation
	\begin{equation}\label{heat}\partial_t \Theta(t,x)+\frac12\text{Tr}\(\Sigma(t,x)D^2 \Theta(t,x)\)=0\end{equation}
	with the following representation (see \cite{Fried1964}), 
	$$\G(t,x;s,y)={1\over(4\pi(s-t))^{n\over2}\{\det[\Sigma(s,y)]\}^{1\over2}}
	\exp\left\{{(x-y)^\top \Sigma(s,y)^{-1}(x-y)\over4(s-t)}\right\}.$$
	Tedious  but straightforward calculation yields that
	$$\left\{\ba{ll}\ad|\Gamma(t,x;s,y)|\leq L(s-t)^{-\frac {d}2}\exp\left\{-\frac{\lambda|x-y|^2}{4(s-t)}\right\},\\
	\ns\ad |\Gamma_x(t,x;s,y)|\leq L(s-t)^{-\frac {d+1}2}\exp\left\{-\frac{\lambda|x-y|^2}{8(s-t)}\right\},
	\ea\right.$$
and
\bel{x-y}\G_y(t,x;s,y)=-\G_x(t,x;s,y)+\G(t,x;s,y)\theta(t,x;s,y),\eel
where
$$\left\{\2n\ba{ll}
\ns\ds\theta(t,x;s,y)={(\det[\Sigma(s,y)])_y\over2\det[\Sigma(s,y)])}
+{\lan[\Sigma(s,y)^{-1}]_y(x-y),x-y\ran\over4(s-t)},\\
\ns\ds\lan[\Sigma(s,y)^{-1}]_y(x-y),x-y\ran=\begin{pmatrix}
\lan[\Sigma(s,y)^{-1}]_{y_1}(x-y),x-y\ran\\
\vdots \\
\lan[a(s,y,i)^{-1}]_{y_n}(x-y),x-y\ran\end{pmatrix}.\ea\right.$$
It is easy to check that
\bel{apptheta}|\theta(t,x;s,y)|+|\theta_y(t,x;s,y)|\les K\bigg(1+{|x-y|^2\over s-t}\bigg).\eel
For reader's convenience, note that if $b$ is independent of $x$, then $\theta=0$. This will simplify the proof a great deal.  Here we are dealing with a  general case when $b$  depends on $x$.

(1) To prove that there exists a unique solution to \eqref{HJB-generalcase}, given a $v$,  we consider the following HJB equation 
\begin{equation}\label{HJB-fixpoint}\left\{\ba{ll}
\ad  \Theta_t(\t;t,x)+D\Theta(\t;t,x)\cdot\Ba(t,x;v(t,x))+ \Bf(\t;t,x, v(t,x))\\
\ns\ad\q\q\q\q\q\q\q+\frac12\text{Tr}\[\Bb(t,x)\Bb'(t,x)D^2\Theta(\t;t,x)\] =0;\\
\ns\ad \Theta(\t;T,x)=\Bg(\t;x).\ea\right.\end{equation}
One can easily see that the solution has the following representation:
$$\ba{ll}\Theta(\t;t,x)\ad=\int_{\dbR^d}\Gamma(t,x;T,y)\Bg(\tau;y)dy\\
\ns\ad\q+\int_t^T\int_{\dbR^d}\Gamma(t,x;s,y)D\Theta(\t;s,y)\cdot \Ba(s,y;v(s,y))dyds\\
\ns\ad\q+\int_t^T\int_{\dbR^d}\Gamma(t,x;s,y)\Bf\big(\t;s,y;v(s,y))dyds.\ea$$	
Note that 	$\Theta(t;t,x)$ is continuous with respect to  $t$ because $f$ and $g$ are continuous with respect to $\t$.

(a) First we prove that there exists a constant $K_1$ independent of $v$ such that
$$|D_x\Theta(\t,t,x)|\leq K_1.$$
	Note that \begin{equation}\label{repreWx}\ba{ll}D\Theta(\t;t,x)\ad=\int_{\dbR^d}\Gamma_x(t,x;T,y)\Bg(\t;y)dy\\
\ns\ad\q+\int_t^T\int_{\dbR^d}\Gamma_x(t,x;s,y) D\Theta(\t;s,y)\cdot \Ba(s,y;v(s,y))dyds\\
\ns\ad\q +\int_t^T\int_{\dbR^d}\Gamma_x(t,x;s,y)\Bf\big(\t;s,y;v(s,y)\big)dyds.\ea\end{equation}	
By \eqref{x-y} and \eqref{apptheta}, integrating by parts, we have
$$\ba{ll}\ad\left|\int_{\dbR^d}\Gamma_x(t,x;T,y)\Bg(\tau;y)dy\right|\\
\ns\ad=\left|\int_{\dbR^d}\(-\G_x(t,x;s,y)+\G(t,x;s,y)\theta(t,x;s,y)\)\Bg(\tau;y)dy\right|\\
\ns\ad\leq \left|\int_{\dbR^d}\Gamma(t,x;T,y)\cdot D\Bg(\t;y)dy\right|+\left|\int_{\dbR^d}|\G(t,x;s,y)\theta(t,x;s,y)\Bg(\tau;y)|dy\right|\\
\ns\ad\leq L\(1+\Vert Dg(\t;\cdot)\Vert_{L^\infty}+\Vert g(\t;\cdot)\Vert_{L^\infty}\).\ea$$	
Using Assumption \ref{B-2}, it can be seen that from \eqref{repreWx} that 
$$\ba{ll}|D\Theta(\t;t,x)|\ad\leq L(1+\Vert D\Bg(\tau;\cdot)\Vert_{L^\infty}+\Vert \Bg(\t;\cdot)\Vert_{L^\infty})\\
\ns\ad\q+L\int_t^T(s-t)^{-\frac12}\(1+\|D\Theta(\t;s,\cdot)\|_{L^\infty}\)ds\\
\ea$$	
Using Grownwall's inequality, we have 
\begin{equation}\label{WDWbound}\|D\Theta(\t;t,\cdot)\|_{L^\infty}\leq K\(1+\sup_{0\leq\t\leq T}\(\Vert D\Bg(\tau;\cdot)\Vert_{L^\infty}+\Vert \Bg(\tau;\cdot)\Vert_{L^\infty}\)\):=K_1.\end{equation}	
where $K$ is a constant independent of $\mu$ and $v$.\ss

	(b) Let $\Theta_i$ be the solution of  \eqref{HJB-fixpoint} under $v_i$. We need to  prove that for some $K_2>0$
\bel{vVcontraction}\sup_{T-\delta\leq t\leq T}|D\Theta_1(\t;t,\cdot)-D\Theta_2(\t;t,\cdot)|_{L^\infty}\leq K_2\sqrt{\d}\(1+\Vert v(s,\cdot)-v(s,\cdot)\Vert_{L^\infty}\).\eel

Note that
$$\begin{aligned}|&D\Theta_1(\t;t,x)-D\Theta_2(\t;t,x)|\\
& \leq \int_t^T\bigg|\int_{\dbR^d}\Gamma_x(t,x;s,y)D\Theta_1(\t;s,y)\cdot \Ba(s,y;v_1(s,y))dy \\ & \qquad \qquad\qquad -\int_{\dbR^d}\Gamma_x(t,x;s,y)D\Theta_2(\t;s,y)\cdot \Ba(s,y;v_2(s,y))dy\bigg| ds\\
&\q+\int_t^T \left|\int_{\dbR^d}\Gamma_x(t,x;s,y) \Bf(\t,s,y;v_1(s,y))dy-\int_{\dbR^d}\Gamma_x(t,x;s,y) \Bf(\t,s,y;v_2(s,y))dy\right|ds\\
&\leq K\int_t^T(s-t)^{-\frac12}\(1+\Vert D\Theta_1(\t;s,\cdot)-D\Theta_2(\t;s,\cdot)\Vert_{L^\infty}+\Vert v_1(s,\cdot)-v_2(s,\cdot)\Vert_{L^\infty}\)ds.\end{aligned}$$
This concludes \eqref{vVcontraction}.\ss

From \eqref{repreWx}, we know $\Theta(\t,t,\cdot)\in C^1(\dbR^d)$. Define a map $\Psi:C([T-\d,T],C^1(\dbR^d))\mapsto C([T-\d,T],C^1(\dbR^d))$ such that $\Phi(\Theta)$ is the solution of  \eqref{HJB-fixpoint} using $D\Theta$.
By \eqref{WDWbound} and \eqref{vVcontraction}, we know that $\Psi$  is a contraction if $\d$ is small. Thus there exists a unique solution $\Theta$ for \eqref{HJB-generalcase} on $[T-\d, T]$. Since the constant $K$ is independent of $\d$, for time interval $[0,T]$. One can divide the time horizon $[0,T]$ into several small intervals, and then 
prove that the solution exists on whole time interval $[0,T]$ recursively.

(2) By \eqref{WDWbound}, it is easy to see that 
$$\Vert D\Theta(\t,s,\cdot)\Vert_{L^\infty}\leq K.$$ 
Now we verify that
	\bel{boundfortheta}  \Vert D^2\Theta(\t,s,\cdot)\Vert_{L^\infty}\leq K.\eel

Use the fundamental solution method again, writing $v=D\Theta$,
\bel{theta2der} \ba{ll}\partial_{x_i,x_j}\Theta_{n+1}(\t;t,x) \ad=\int_{\dbR^d}\Gamma_{x_i,x_j}(t,x;T,y)\Bg(\t;y)dy\\
\ns\ad\q+\int_t^T\int_{\dbR^d}\Gamma_{x_i,x_j}(t,x;s,y)D\Theta_{n+1}(\t,s,y)\cdot \Ba(s,y,\rho(s);v(s,y))dyds\\
\ns\ad\q+\int_t^T\int_{\dbR^d}\Gamma_{x_i,x_j}(t,x;s,y)\Bf\big(\t;s,y;v(s,y)\big)dyds.\ea\eel
Note that 
$$\ba{ll}\ad \Gamma_{x_i,x_j}(t,x;s,y) \\ \ad\ \ =\(-\Gamma_{y_i}(t,x;s,y)+\Gamma(t,x;s,y)\theta_i(t,x,s,y)\)_{x_j}\\
\ns\ad\ \ =-\Gamma_{x_j,y_i}(t,x;s,y)+\Gamma_{x_j}(t,x;s,y)\theta_i(t,x,s,y)+\Gamma(t,x;s,y)\cdot\frac{[\Sigma(s,y)]^{-1}]_{y_j}(x_j-y_j)}{2(s-t)},\ea$$
it follows that
\begin{equation}\label{contraction-1}\ba{ll}\ad\left|\int_{\dbR^d}\Gamma_{x_i,x_j}(t,x;T,y)\Bg(\t;y)dy\right|\\
\ns\ad\q\leq \bigg|\int_{\dbR^d}\(-\Gamma_{x_j,y_i}(t,x;s,y)+\Gamma_{x_j}(t,x;s,y)\theta_i(t,x,s,y) \\ \ad \qquad \qquad+\Gamma(t,x;s,y)\cdot\frac{[\Sigma(s,y)]^{-1}]_{y_j}(x_j-y_j)}{2(s-t)}\)\Bg(\t,y)dy\bigg|\\
\ns\ad\q\leq \left|\int_{\dbR^d}\Gamma_{x_j}(t,x;s,y)\Bg_{y_j}(\t,y)dy\right|\\
\ns\ad\qq+\left|\int_{\dbR^d}\(-\G_{y_j}(t,x;s,y)+\G(t,x;s,y)\theta_j(t,x;s,y)\)\theta_i(t,x,s,y)\Bg_{y_j}(\t,y)dy\right|\\
\ns\ad\qq+\left|\int_{\dbR^d}\Gamma(t,x;s,y)\cdot\frac{[\Sigma(s,y)]^{-1}]_{y_j}(x_j-y_j)}{2(s-t)}\Bg(\t,y)dydy\right|\\
\ns\ad\q \leq K(1+\Vert D^2g(\t,\cdot)\Vert_{L^\infty}+\Vert Dg(\t,\cdot)\Vert_{L^\infty}+\Vert g(\t,\cdot)\Vert_{L^\infty}).
\ea\end{equation}
Note that \bel{contraction-2}\ba{ll}\ad\left|\int_{\dbR^d}\Gamma_{x_i,x_j}(t,x;s,y)D\Theta(\t,s,y)\cdot \Ba(s,y,;v(s,y))dy\right|\\
\ns\ad\leq \left|\int_{\dbR^d}\Gamma_{x_j}(t,x;s,y) \(D\Theta(\t,s,y)\Ba(s,y,;v(s,y))\)_{y_i}dy\right|\\
\ns\ad\q+\left|\int_{\dbR^d}\Gamma_{x_j}(t,x;s,y) D\Theta(\t,s,y)\Ba(s,y;v(s,y))dy\right|\\
\ns\ad\q+\left|\int_{\dbR^d}\Gamma(t,x;s,y)\cdot\frac{[\Sigma(s,y)]^{-1}]_{y_j}(x_j-y_j)}{2(s-t)}D\Theta(\t,s,y)\Ba(s,y;v(s,y))dy\right|\\
\ns\ad\leq L(t-s)^{-\frac12}(1+\Vert D^2\Theta(\t,s,\cdot)\Vert_{L^\infty}+\Vert D\Theta(\t,s,\cdot)\Vert_{L^\infty})+\Vert Dv(\t,s,\cdot)\Vert_{L^\infty}).\ea\eel
Smilarly we have 
\bel{contraction-3}\ba{ll}\ad\left|\int_{\dbR^d}\Gamma_{x_i,x_j}(t,x;s,y) \Bf(\t,s,y;v(s,y))dy\right|
\leq L(t-s)^{-\frac12}(1+\Vert Dv(\t,s,\cdot)\Vert_{L^\infty}).\ea\eel

Recall $v=D\Theta$, plugging \eqref{contraction-1}--\eqref{contraction-3} into \eqref{theta2der}, and noting \eqref{WDWbound}, it follows that
$$ 
\ds|\partial_{x_i,x_j}\Theta(\t;t,x)|\leq L\(1
+\int_{t}^T(s-t)^\frac12\(1+\Vert D^2\Theta(\t;s,\cdot)\Vert_{L^\infty}\)ds\).
$$ 
Gronwall's inequality then implies that \eqref{boundfortheta}  holds.

	(3)	 Now let's verify \eqref{nablaW-2}. Let $\Theta_i$ be the solutions of \eqref{HJB-generalcase} using $\mu_i$ for $i=1,2$. By  \eqref{repreWx} and \eqref{WDWbound},
	$$\ba{ll}\ad|D\Theta_1(\t;t,x)-D\Theta_2(\t;t,x)|\\
	\ns\ad\leq L\int_{\dbR^d}|\Gamma(t,x;s,y)-\Gamma(t,x;s,y)|\cdot|D\Bg(\tau,y)|dy\\
	\ns\ad\q+ L\int_t^T\int_{\dbR^d}|\Gamma_{x}(t,x;s,y)||D \Theta_1(\t;s,y)-D\Theta_2(\t;s,y)|dyds\\
	\ns\ad\q+L\int_t^T\int_{\dbR^d}|\Gamma_{x}(t,x;s,y)|w(\mu_1(s),\mu_2(s))dyds
	\\
	\ns\ad\leq L\int_t^T(s-t)^{-\frac12}\sup_{0\leq \t\leq T}\|D \Theta_1(\t;s,\cdot)-D \Theta_2(\t;s,\cdot)\|_{L^\infty}ds+K(T-t)^\frac12m(\mu_1,\mu_2).\ea$$
	By Grownwall's inequality, we have 
	$$|D\Theta_1(\t;t,x)-D\Theta_2(\t;t,x)|\leq Km(\mu_1,\mu_2).$$
	The proof is complete.
\end{proof}

\section{Semi-linear Distribution-dependent Case}\label{sec-semi-linear}
In this section, we  deal with a special case of semi-linear distribution-dependent diffusions with a quadratic cost.  In such a case,    the solution to the HJB equation can be presented as a Riccati equation  which simplifies the verification process a lot. By such an explicit representation,  the system is not required to be non-degenerate and therefore  $W(\cdot)$ is assumed to be 1-dimensional Brownian motion for convenience. Moreover, to simplify the form of the Riccati equation,  we  assume the diffusion coefficient depends on the distribution term  and the  time variable $t$ only, not on the state variable $x$. 

 In this section,   $\dbS_d$ is the  set of   $d\times d$ symmetric matrices equipped with following metric
$$|S_1-S_2|_\dbS=\sup_{|x|=1}|\langle x,(S_1-S_2)x\rangle|, \quad S_{1},S_{2}\in \dbS_{d}.$$
Next $C([0,T],\dbS)$ denotes the set of  $\dbS$-valued continuous curves on $[0,T]$ equipped with norm
$$\Vert S\Vert_{C([0,T],\dbS)}:=\sup_{0\leq t\leq T}|S(t)|_{\dbS},$$
where 
$$ S=\{S(t)\in\dbS:0\leq t\leq T\}.$$

Consider the following $d$-dimensional distribution-dependent controlled SDE
\begin{equation}\label{semilinear}\ba{ll}\ds dX(t)\ad=[A(t)X(t)+B(t)u(t)+a(t,\rho(t))]dt+b(t,\rho(t))dW(t),\ea\end{equation}
where $A(\cdot) \in\dbR^{d\times d}$, $B(\cdot)\in\dbR^{d\times l}$ and $a(\cdot),b(\cdot):[0,T]\times\sP_2(\dbR^2)\mapsto\dbR^{d}$.  The control space is $U=\dbR^l$.

 The admissible strategy is defined as in \eqref{admissiblecontrol}. Let
 $$f(\t;t,x,\rho;u)=\langle x,Q(\t;t)x\rangle+ \langle u,R(\t;t)u\rangle,\q g(\t;x,\rho)=\langle x, G(\t)x\rangle.$$
The time-inconsistent cost functional is defined as
\begin{equation}\label{cost-2}\ba{ll}\ad\dbJ(\t;t_0,\xi; u)\\
\ns\ad:=\BE_{t_0,\xi}\bigg[\int_{t_0}^T f(\t;t,X(t);u(t,X(t))+F(\t;t,\text{law}(X(t))) dt+ g(\t;X(T))+H(\t;\text{law}(X(T)) \bigg],\ea\end{equation}
where $F:[0,T]\times[0,T]\times\sP_2(\dbR^d)\mapsto \dbR^+$ and $H:[0,T]\times\sP_2(\dbR^d)\mapsto \dbR^+$.
 The value function is
\begin{equation}\label{value-2}\dbV(t_0,\xi; u)=\dbJ(t_0;t_0,\xi;u).
\end{equation}

We need the following assumption in this section.
\begin{assumption}
\label{A-3}
\begin{itemize}
	\item[{\rm (1)}]
$$\left\{\ba{ll}
\ad|a(t,\rho)|^2+|b(t,\rho)|^2+F(\t;t,\rho)+H(\t;\rho)\leq K\(1+\int_{\dbR^d}|y|^2\rho(dy)\)\\
\ns\ad|a(t,\rho_1)-a(t,\rho_2)|+|b(t,\rho_1)-b(t,\rho_2)|\leq Kw(\rho_1,\rho_2)\\
\ns\ad|F(\t;t,\rho_1)-F(\t;t,\rho_2)|^2+|H(\t;\rho_1)-H(\t; \rho_2)|^2\\
\ns\ad\qq\leq K\(1+\int_{\dbR^d}|y|^2\rho_1(dy)+\int_{\dbR^d}|y|^2\rho_2(dy)\)w^2(\rho_1,\rho_2).\ea\right.$$

	\item[{\rm (2)}]	$ A(\cdot),B(\cdot)$ are bounded continuous deterministic functions on $[0,T]$.

	\item[{\rm (3)}] $ Q(\t;\cdot),R(\t;\cdot)$ are  uniformly bounded continuous $\dbS$-valued deterministic processes  on $[0,T]$ with
$Q(\t;t),R(t;t)\geq\e_0^{-1}I_l
$ for some $\e_0>0$.
	\item[{\rm (4)}] $G(\t)\geq0$.

\end{itemize}
\end{assumption}

\subsection{Linear Distribution-independent Diffusion}

Given  ${\Ba}(\cdot),\Bb(\cdot),\BF(\t,\cdot)\in C([0,T],\dbR^{d})$ and $\BH(\t)\in\dbR$, consider the following $d$-dimensional  controlled SDE with $W(\cdot)$ being a 1-dimensional standard Brownian motion,
\bel{semisdemu} dX(t)=[A(t)X(t)+B(t)u(t)+\Ba(t)]dt+\Bb(t)dW(t)\eel
with the  cost functional defined as \begin{equation}\label{cost-2}\ba{ll}\ns\ad J(\t;t,x;u)\\
\ns\ad:=\BE_{t,x}\bigg[\int_t^T [\langle X(s),Q(\t;t)X(s)\rangle+ \langle u(s),R(\t;s)u(s)\rangle+ \BF(\t,s)]ds+\langle G(\t)X(T),X(T)\rangle+\BH(\t)\bigg]\ea\end{equation}
and the value function is
$$V(t,x;u)=J(t;t,x;u).$$

Since it is linear-quadratic case now, solving \eqref{hjb-equi} using the appropriate coefficients, one can conclude that the time-inconsistent equilibrium is
\begin{equation}\label{equilq}u(t,x)=-R(t;t)^{-1}B'(t) [P(t;t)x+ p(t;t)],\end{equation}
where 
 $P(\t;\cdot)\in\dbR^{d\times d},q(\t;\cdot)\in\dbR^d,\eta(\t;\cdot)\in\dbR$ satisfy (we omit the dependence on $u$ in $J$ and $V$ now)
$$J(\t;t,x)=\langle P(\t;t)x,x\rangle+2\langle p(\t;t),x\rangle+\eta(\t;t )\q\text{and}\q V(t,x)=J(t;t,x),$$
and $(P,p,\eta)$ satisfies the following Riccati equations (if there exists a solution)
\begin{equation}\label{riccati-1}\left\{\ba{ll}\ad\dot{P}(\t;t)+P(\t;t) A(t)+ A'(t) P(\t;t)+Q(\t;t)-2P(\t;t)B(t)R(t;t)^{-1}B'(t)P(t;t)\\
\ns\ad\qq +P(t;t)B(t)R(t;t)^{-1}R(\t;t)R(t;t)^{-1}B'(t)P(t;t)=0,\\
\ns\ad P(\t;T)=G(\t);\ea\right.\end{equation}
\begin{equation}\label{riccati-2}\left\{\ba{ll}\ns\ad\dot p(\t;t)+P(\t;t)[\Ba(t)-BR(t;t)^{-1}B'(t)p(t;t)]\\
\ns\ad\q+P(t;t)B(t)R^{-1}(t;t)R(\t;t)R^{-1}(t;t)B'(t)P(t;t)=0,\\
\ns\ad p(\t;T)=0;\ea\right.\end{equation}
and
\begin{equation}\label{riccati-3}\left\{\ba{ll}\ns\ad \dot\eta(\t;t)+\lan P(\t;t)\Bb(t),\Bb(t)\ran+p'(\t;t)B(t)R(t;t)^{-1}R(\t;t)R(t;t)^{-1}B'(t)p(\t;t)+\BF(\t;t)=0,\\
\ns\ad \eta(\t;T)=\BH(\t).\ea\right.\end{equation}

Here we note that $u$ is depending on $\mu$ which doesn't meet Assumption \ref{A-1}. While due to the linear structure, it is still possible for us to deal with such a special case.

Based on the representations, we have the following proposition.

\begin{proposition}\label{verifyassumption}

\begin{itemize}
	\item[\rm(1)] $P(\t;t)$ is independent of the choices of $\Ba,\Bb,\BF,\BH$. So is $D^2V(t,x)$.

	\item[\rm(2)]  There exists a positive constant $K_P$ (depending on $P$) such that
$$\vert D V(t,0)\vert=2\vert p(t;t)\vert\leq K_PT(1+\Vert \Ba(\cdot)\Vert_{L^\infty}).$$

	\item[\rm(3)] Let $V_i(t,x)$  be the solutions corresponding to $(\Ba_i(\cdot),\Bb_i(\cdot)), i=1,2$. Then
$$\vert DV_1(t,x)-D V_2(t,x)\vert=2\vert p_1(t;t)-p_2(t;t)\vert\leq K_PT\Vert \Ba_1(\cdot)-\Ba_2(\cdot)\Vert_{L^\infty}.$$

\end{itemize}
\end{proposition}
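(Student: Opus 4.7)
The plan is to exploit the decoupled triangular structure of the Riccati system \eqref{riccati-1}--\eqref{riccati-3}, together with the explicit quadratic representation
\begin{equation*}
V(t,x) = \langle P(t;t)x, x\rangle + 2\langle p(t;t), x\rangle + \eta(t;t).
\end{equation*}

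For part (1), I would simply inspect \eqref{riccati-1}: its coefficients involve only $A, B, Q, R, G$, none of $\Ba$, $\Bb$, $\BF$, $\BH$. Hence $P(\t;t)$ depends only on the fixed problem data, and differentiating $V$ twice in $x$ gives $D^2V(t,x) = 2 P_{\mathrm{sym}}(t;t)$, which inherits this independence.

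For part (2), note that $DV(t,x) = 2P(t;t)x + 2p(t;t)$, so in particular $DV(t,0) = 2p(t;t)$. The key observation is that \eqref{riccati-2} is \emph{linear} in $p$ and is free of $\Bb$, $\BF$, $\BH$. Integrating backward in $t$ from $p(\t;T)=0$ yields the representation
\begin{equation*}
p(\t;t) = \int_t^T \Bigl[P(\t;s)\Ba(s) - P(\t;s)B(s)R(s;s)^{-1}B'(s)p(s;s) + P(s;s)B(s)R(s;s)^{-1}R(\t;s)R(s;s)^{-1}B'(s)P(s;s)\Bigr] ds.
\end{equation*}
Setting $\t = t$ produces a Volterra integral equation for the diagonal $\rho(t) := p(t;t)$, in which the inhomogeneous part is bounded by $C_P T(1 + \|\Ba\|_{L^\infty})$ and the kernel involves only the bounded quantities $P, B, R^{-1}$ (by part (1) and Assumption \ref{A-3}). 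A standard backward Gronwall argument then yields $\sup_t |\rho(t)| \le K_P T(1 + \|\Ba\|_{L^\infty})$, with $K_P$ depending only on $P, B, R, T$.

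For part (3), I would apply the same Volterra analysis to the difference $\Delta p(\t;t) := p_1(\t;t) - p_2(\t;t)$. Since $P$ is common to both problems (by part (1)) and the $\Bb_i$-dependence never enters \eqref{riccati-2}, the $P$-quadratic piece cancels and $\Delta p$ satisfies a linear equation of the same form as \eqref{riccati-2} but with inhomogeneous term $P(\t;t)(\Ba_1(t) - \Ba_2(t))$. A second Gronwall estimate then produces $|\Delta p(t;t)| \le K_P T \|\Ba_1 - \Ba_2\|_{L^\infty}$. The only subtlety I foresee is the self-referential appearance of $p(t;t)$ inside the integral, which turns \eqref{riccati-2} into a Volterra rather than pointwise ODE; however, since the kernel is bounded and the horizon is finite, both the well-posedness of $p$ and the desired estimates follow routinely by a contraction-and-Gronwall argument.
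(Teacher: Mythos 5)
Your proof is correct and takes essentially the same approach the paper intends: the paper states this proposition without an explicit proof, asserting it ``based on the representations,'' i.e., precisely the triangular structure of \eqref{riccati-1}--\eqref{riccati-3} that you spell out ($P$ decoupled from $\Ba,\Bb,\BF,\BH$; $p$ linear in $\Ba$ and free of $\Bb,\BF,\BH$). Your Volterra-plus-Gronwall treatment of the diagonal term $p(s;s)$ correctly handles the only nontrivial point.
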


\begin{theorem}
	Suppose that \eqref{riccati-1} admits a unique solution. If $a$ and $b$ are bounded and $\gamma$ has finite $(2+\d)$th moment for some $\d>0$, then there exists an equilibrium.
\end{theorem}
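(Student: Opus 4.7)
The plan is to adapt the Schauder fixed-point argument of Theorem \ref{mainthm-0}(1) to the semi-linear quadratic setting, exploiting the explicit Riccati representation so that the PDE-based Assumption \ref{A-2} need not be verified. Concretely, for each $\mu\in\sM_\gamma$ set $\Ba(t):=a(t,\mu(t))$, $\Bb(t):=b(t,\mu(t))$, $\BF(\t;t):=F(\t;t,\mu(t))$, $\BH(\t):=H(\t;\mu(T))$, and let $(P,p,\eta)$ be the solution of the Riccati system \eqref{riccati-1}--\eqref{riccati-3} with these coefficients. By hypothesis \eqref{riccati-1} is solvable, and \eqref{riccati-2}--\eqref{riccati-3} are then linear ODEs, so the closed-loop feedback
\begin{equation*}
\cT_2(\mu)(t,x)=-R(t;t)^{-1}B'(t)\bigl[P(t;t)x+p(t;t)\bigr]
\end{equation*}
is well defined on all of $\sM_\gamma$.

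The key observation is that $P(t;t)$ is independent of $\mu$ by Proposition \ref{verifyassumption}(1), while $p(t;t)$ depends on $\mu$ only through $\Ba$, which is uniformly bounded because $a$ is bounded. Proposition \ref{verifyassumption}(2) then yields a bound on $|p(t;t)|$ that is uniform over $\mu\in\sM_\gamma$, so there is a constant $\alpha>0$, independent of $\mu$, with $\cT_2(\mu)\in\sU_\alpha$ for every $\mu\in\sM_\gamma$. Since $\gamma$ has finite $(2+\d)$th moment, Lemma \ref{lawXM}(3) furnishes constants $C,\lambda>0$ (depending only on $\alpha,\d,\gamma$) such that $\cT_1^\gamma\circ\cT_2(\sM_\gamma)\subset\sM^{\d,C,\lambda}_\gamma$. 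The set $\sM^{\d,C,\lambda}_\gamma$ is compact (Arzela--Ascoli) and convex (by joint convexity of the squared Wasserstein distance, which preserves both the moment bound and the $\frac12$-H\"older-in-time estimate under convex combinations).

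To establish continuity of $\cT_1^\gamma\circ\cT_2$, I would use Assumption \ref{A-3}(1) to get $\|\Ba_1-\Ba_2\|_\infty\leq K\,m(\mu_1,\mu_2)$, and then invoke Proposition \ref{verifyassumption}(3) to obtain $|p_1(t;t)-p_2(t;t)|\leq K_PKT\,m(\mu_1,\mu_2)$, which gives
\begin{equation*}
\sup_{(t,x)}|\cT_2(\mu_1)(t,x)-\cT_2(\mu_2)(t,x)|\leq C\,m(\mu_1,\mu_2)
\end{equation*}
(notably, the bound is uniform in $x$ since the slope $-R^{-1}B'P$ does not depend on $\mu$). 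Lemma \ref{contiuityofsolution} then yields $m(\cT_1^\gamma\circ\cT_2(\mu_1),\cT_1^\gamma\circ\cT_2(\mu_2))\leq C'\,m(\mu_1,\mu_2)$, so $\cT_1^\gamma\circ\cT_2$ is continuous from the compact convex set $\sM^{\d,C,\lambda}_\gamma$ into itself. Schauder's fixed-point theorem then produces $\mu^\star\in\sM^{\d,C,\lambda}_\gamma$ with $\cT_1^\gamma\circ\cT_2(\mu^\star)=\mu^\star$, which is the required equilibrium in the sense of Definition \ref{def-optimalcurve}.

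The main obstacle I expect is confirming that every constant in the above chain ($\alpha$, the bound on $p$, the Lipschitz modulus of $\cT_2$ with respect to $\mu$) is genuinely uniform over $\mu\in\sM_\gamma$; this is precisely where the boundedness of $a$ and $b$ is essential, since without it $p$ could grow with $\int|x|^2\mu(t,dx)$ and the Lipschitz class $\sU_\alpha$ would have to enlarge with $\mu$. A secondary subtlety is that the feedback $\cT_2(\mu)$ depends on $\mu$ through the intercept $p(t;t)$ and so does not formally fall under Assumption \ref{A-1}, but the linear structure confines this $\mu$-dependence to an $x$-independent term, which is enough to keep the quantitative estimates of Lemma \ref{contiuityofsolution} applicable.
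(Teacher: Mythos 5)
Your proposal is correct and follows essentially the same route as the paper: the paper likewise uses boundedness of $a$ and $b$ to obtain the uniform $(2+\delta)$-moment bound and the uniform estimates $|p(t;t)|\le K$ and $|p_1(t;t)-p_2(t;t)|\le KT\,m(\mu_1,\mu_2)$ from Proposition \ref{verifyassumption}, and then invokes the Schauder fixed-point argument of Theorem \ref{mainthm-0}(1). Your write-up in fact spells out more explicitly than the paper does why the constants are uniform over $\mu$ and why the $\mu$-dependence of the feedback sitting only in the intercept $p(t;t)$ keeps Lemma \ref{contiuityofsolution} applicable, which is the same point the paper flags informally after \eqref{riccati-3}.
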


\begin{proof}
Let $X$ be the solution of \eqref{semisdemu} with 	$\Ba(t)= a(t,\mu(t)), \Bb(t)= b(t,\mu(t))$
Since $a$ and $b$ are bounded, It\^{o}'s formula concludes that 
\bel{uboundXXX}\BE\sup_{0\leq t\leq T}|X(t)|^{2+\d}\leq K\eel	
	for  some $K>0$ independent of $\mu$.
	
Let
\begin{equation}\label{BbBd}\Ba_i(t)= a(t,\mu_i(t)),~\Bb_i(t)= b(t,\mu_i(t)),~\BF_i(t)= F(t,\mu_i(t))\text{ and }\BH_i(\t)= H(\t,\mu_i(T))\end{equation}
Thus it follows that
$$V_i(\t;t,x)=\langle P_i(\t;t)x,x\rangle+2\langle p_i(\t;t),x\rangle+\eta_i(\t;t )$$
where $(P_i,p_i,\eta_i)$ is the solution of \eqref{riccati-1}, \eqref{riccati-2} and \eqref{riccati-3} with $(\Ba_i,\Bb_i,\BF_i,\BH_i)$.
By the definitions of $\Ba$ and $\Bb$ in \eqref{BbBd}, Assumption \ref{A-3} and Proposition \ref{verifyassumption} yields the following  estimates
	\bel{plip}\left\{\ba{ll}
	\ad\vert p(t;t)\vert^2\leq K_PT\(1+\Vert \Ba(\cdot)\Vert\)\leq K\\ [2mm]
	\ns\ad\vert p_1(t;t)-p_2(t;t)\vert\leq KTm(\mu_1,\mu_2).\ea\right.\eel
By virtue of the proof for Theorem \ref{mainthm-0}, there exists  an equilibrium.	
	\end{proof}

\begin{remark}
	{\rm (1) In \cite{Yong2012b}, the author presented a sufficient condition in the examples for the existence of a  solution to \eqref{riccati-1}. 
		
	(2)	 The assumption that $a$ and $b$ are bounded is not very general because  it didn't even cover the case $$a(t,\rho)=\int_{\dbR^d}x\rho(dx).$$  The reason for such assumption is to guarantee that the first inequality in \eqref{plip} holds such that \eqref{uboundXXX} is true.

(3) It is not hard to see  if $T$ is small, \eqref{riccati-1} has a unique solution and \eqref{plip} holds. Thus our results are always true for small time horizon.}
\end{remark}

\subsection{Strong Dissipative Case}\label{sec-long-time}

In this section, we will raise a strong dissipativity condition such that 

(1) $a$ and $b$ are not  necessarily bounded,

(2) \eqref{riccati-1} admits a unique solution,

(3) \eqref{plip} and \eqref{uboundXXX} hold,

(4) the equilibrium is unique.

\noindent The following is the assumption we use.
\begin{assumption}\label{B-0}
Let $\lambda_{\max}(t)$ be the largest real part of the eigenvalues of $A(t)$. Assume that for some $L_0>0$,
$$\sup_{0\leq t\leq T}\lambda_{\max}(t)\leq -L_0.$$ 

\end{assumption}

\begin{lemma} Under Assumption \ref{B-0}, if $L_0>0$ is large,
\eqref{riccati-1} exists a unique solution. Moreover, the solution $P(\t;t)$ is uniformly bounded by a constant which is independent of $L_0$. As a result, the constant $K_P$ in Proposition \ref{verifyassumption} is independent of $L_0$.
\end{lemma}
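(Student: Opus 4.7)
My plan is to exploit the integral (variation-of-constants) representation of \eqref{riccati-1} together with the exponential decay of the transition matrix of $\dot x=A(t)x$ provided by Assumption \ref{B-0}. The Riccati is a backward matrix ODE flow in $t$ parametrised by $\tau$ and coupled only through the diagonal $\Pi(t):=P(t;t)$. Let $\Phi(s,t)$ denote the transition matrix of $\dot x=A(t)x$ on $[t,s]$; strong dissipativity yields $\Vert\Phi(s,t)\Vert\le C_0 e^{-L_0(s-t)}$ for $0\le t\le s\le T$ with $C_0$ independent of $L_0$ (via a standard Lyapunov estimate on $\tfrac{d}{ds}|\Phi(\cdot,t)x|^2$).

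Rewriting \eqref{riccati-1} in variation-of-constants form yields
\begin{equation*}
\begin{aligned}
P(\tau;t)={}&\Phi(T,t)'G(\tau)\Phi(T,t)+\int_t^T\Phi(s,t)'\Big\{Q(\tau;s)-2P(\tau;s)B(s)R(s;s)^{-1}B'(s)\Pi(s)\\
&+\Pi(s)B(s)R(s;s)^{-1}R(\tau;s)R(s;s)^{-1}B'(s)\Pi(s)\Big\}\Phi(s,t)\,ds.
\end{aligned}
\end{equation*}
Setting $N(t):=\sup_{\tau}\Vert P(\tau;t)\Vert$ (so $\Vert\Pi(t)\Vert\le N(t)$) and using Assumption \ref{A-3} to bound $G,Q,R^{-1},B$, the exponential decay gives
$$N(t)\le C_0^2\Vert G\Vert_\infty e^{-2L_0(T-t)}+C_0^2\int_t^T e^{-2L_0(s-t)}\big(\Vert Q\Vert_\infty+3C_1 N(s)^2\big)\,ds,$$
where $C_1$ depends only on $\Vert B\Vert_\infty$ and $\Vert R^{-1}\Vert_\infty$. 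With $M:=\sup_{[0,T]}N$, this reduces to $M\le C_0^2\Vert G\Vert_\infty+\tfrac{C_0^2}{2L_0}(\Vert Q\Vert_\infty+3C_1 M^2)$; for $L_0$ large, the smaller root of this quadratic gives $M\le K:=2C_0^2\Vert G\Vert_\infty$, with $K$ independent of $L_0$.

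For existence and uniqueness I set up a Picard iteration on the closed ball $\mathcal B:=\{\Pi\in C([0,T],\dbS_d):\Vert\Pi\Vert\le K\}$: given $\Pi^{(n)}\in\mathcal B$, solve the resulting linear backward ODE to obtain $P^{(n+1)}(\tau;\cdot)$ (classical for linear matrix ODEs with bounded coefficients), and define $\Pi^{(n+1)}(t):=P^{(n+1)}(t;t)$. The a priori estimate above confirms this map sends $\mathcal B$ into itself. The same integral representation, applied to the difference of two iterates, yields a Lipschitz factor of order $1/L_0$ in $C([0,T],\dbS_d)$; for $L_0$ sufficiently large it is a contraction, producing a unique fixed point $\Pi$ and hence a unique solution $P(\tau;t)$ with $\sup_{\tau,t}\Vert P(\tau;t)\Vert\le K$. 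For the final claim, \eqref{riccati-2} is linear in $p$ with coefficients involving only $P$, $B$, $R^{-1}$, so a Gronwall argument immediately produces a constant $K_P$ depending on $\Vert P\Vert_\infty,\Vert B\Vert_\infty,\Vert R^{-1}\Vert_\infty$ but not on $L_0$.

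The main obstacle is global solvability: the quadratic coupling through $\Pi$ can cause blow-up in finite time without damping, and one cannot expect $L_0$-independent bounds from a pure Picard/local-in-time argument. Strong dissipativity is precisely the mechanism that supplies the exponential kernel in the integral form, collapsing the quadratic term into a coefficient of order $1/L_0$. The subtle point is to check that none of the constants---$C_0$ in the transition-matrix bound, and the various $\Vert\cdot\Vert_\infty$ bounds on the data---secretly depend on $L_0$, so that the resulting a priori constant $K$ is genuinely $L_0$-free.
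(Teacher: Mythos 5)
Your proposal is correct and follows essentially the same route as the paper's proof: an iteration on the diagonal $\Pi(t)=P(t;t)$ in which the dissipativity of $A$ converts the quadratic term into a contribution of order $1/L_0$, so that the smaller root of the resulting scalar quadratic inequality gives an a priori bound independent of $L_0$, followed by a contraction argument for existence and uniqueness and a linear/Gronwall treatment of the $p$-equation for $K_P$. The only cosmetic difference is that you phrase the estimate via the variation-of-constants formula with the transition matrix and obtain a global-in-time contraction with Lipschitz constant $O(1/L_0)$, whereas the paper conjugates by $e^{\cA(t)}$ and contracts on short intervals $[T-\delta,T]$ before patching; both arguments rest on the same uniform bound on the iterates.
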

\begin{proof} 
We adopt the fixed-point theory here.
Given $v_i(t)= P_i(t;t)$ (resp. $\bar v_i(t)=\bar P_i(t;t)$), let $ P_{i+1}(\t;t)$ (resp. $\bar P_{i+1}(\t;t)$) be the solution of 	
\begin{equation}\label{ricin}\left\{\ba{ll}\ad\dot{ P }(\t;t)+ P (\t;t) A+A' P (\t;t)+Q(\t;t)- 2 P (\t; t)BR(t;t)^{-1}B'v_i(t)\\
\ns\ad\qq +v_i(t)BR(t;t)^{-1}R(\t;t)R(t;t)^{-1}B'v_i(t)=0;\\
\ns\ad  P (\t;T)=G(\t).\ea\right.\end{equation} 
Note that 
$$\left\{\ba{ll}\ad 0\leq  x'\big(v_i(t)BR(t;t)- P (\t;t)BR(\t;t)\big)R(\t;t)^{-1}\big(R(t;t)B'v_i(t)-R(\t;t)B' P (\t;t)\big)x\\
\ns\ad\qq\leq L(|P(\t;t)|^2_\dbS+|v_i(t;t)|^2_\dbS)|x|^2,\\ [2mm]
\ns\ad x' P (\t;t)BR(\t;t)^{-1}B' P (\t;t)x\geq 0,\ea\right.$$
and thus
\begin{align*}0& =\dot{ P }_{i+1}(\t;t)+ P_{i+1} (\t;t) A+A' P_{i+1} (\t;t)+Q(\t;t)- 2 P _{i+1}(\t; t)BR(t;t)^{-1}B'v_i(t) \\
\ns\ad\q+v_i(t)BR(t;t)^{-1}R(\t;t)R(t;t)^{-1}B'v_i(t)\\
& =\dot{ P_{i+1} }(\t;t)+ P_{i+1} (\t;t) A+A' P_{i+1} (\t;t)+Q(\t;t)- P_{i+1} (\t;t)BR(\t;t)^{-1}B' P_{i+1} (\t;t)\\
\ns\ad\q+\big(v_iBR(t;t)- P_{i+1} (\t;t)BR(\t;t)\big)R(\t;t)^{-1}\big(R(t;t)B'v_i-R(\t;t)B' P_{i+1} (\t;t)\big)\\
& \leq \dot{ P_{i+1} }(\t;t)+ P_{i+1} (\t;t) A+A' P (\t;t)+Q(\t;t)+L(|P_{i+1}(\t;t)|^2_\dbS+|v_i(t)|^2_\dbS)I_d.\end{align*}
Let  $\cA(t)$ be the solution of
$$\frac{d}{dt}\cA(t)=A(t) \q\text{and}\q \cA(T)=0.$$
Note that 
$$\ba{ll}\ds\frac d{dt}\(e^{-\cA'(t)}P_{i+1}(\t;t)e^{-\cA(t)}\)\ad=e^{-\cA'(t)}\(\dot P_{i+1}(\t;t)+A(t)'P_{i+1}(\t;t)+P_{i+1}(\t;t)A(t)\)e^{-\cA(t)}\\
\ns\ad\geq -e^{-\cA'(t)}\(Q(\t;t)+L(|P_{i+1}(\t;t)|^2_\dbS+|v(t)|^2_\dbS)I_d)e^{-\cA(t)}.\ea$$
Therefore one can conclude that
$$\ba{ll}P_{i+1}(\t;t)\ad\leq e^{\cA'(t)}G(\t)e^{\cA(t)}+e^{\cA'(t)}\int_t^Te^{-\cA'(s)}\(Q(\t;s)+L(|P_{i+1}(\t;t)|^2_\dbS+|v_i(t)|^2_\dbS)I_d\)e^{-\cA(s)}ds\\
\ns\ad\leq e^{\cA'(t)}G(\t)e^{\cA(t)}+\int_t^Te^{\cA'(t)-\cA'(s)}\(Q(\t;s)+L(|P_{i+1}(\t;t)|^2_\dbS+|v_i(t)|^2_\dbS)I_d\)e^{\cA(t)-\cA(s)}ds\\
\ns\ad\leq K_0\(K_1+\big(\Vert P_{i+1}(\t;\cdot)\Vert^2_{C([t,T];\dbS)}+\Vert v_{i}(\cdot)\Vert^2_{C([t,T];\dbS)}\big)\int_t^Te^{-L_0(s-t)}ds\)I_d\\
\ns\ad\leq K_0\(K_1+\frac{1}{L_0}(\Vert P_{i+1}(\t;\cdot)\Vert^2_{C([t,T];\dbS)}+\Vert v_{i}(\cdot)\Vert^2_{C([t,T];\dbS)})\)I_d\ea$$
By Assumption \ref{B-0}, there exist  uniform constants $K_0,K_1$ (independent of $L_0$),
\begin{equation}\label{vbound-0}\Vert  P_{i+1}(\t;\cdot)\Vert_{C([0,T];\dbS)}\leq K_0\(K_1+\frac{1}{L_0}\Vert  P_i(\t;\cdot)\Vert^2_{C([0,T];\dbS)}\).\end{equation}
If $L_0$ is large such that $L_0\geq 4K_0^2K_1$ and $\Vert v_0(\cdot)\Vert_{C([0,T];\dbS)}$ is small, we can conclude that 
\begin{equation}\label{vbound}\ba{ll}\Vert  P_{i}(\t;\cdot)\Vert_{C([t_0,T];\dbS)}\ad\leq \frac12\(\frac{L_0}{K_0}-\sqrt{\(\frac {L_0}{K_0}\)^2-4L_0K_1}\)\\[2mm]
\ns\ad\leq \frac12\frac{4L_0K_1}{\frac{L_0}{K_0}+\sqrt{\(\frac {L_0}{K_0}\)^2-4L_0K_1}}\\
\ns\ad\leq 2K_1K_0.\ea\end{equation}
Note that the constant on the right-hand side is independent of $L_0$.

By \eqref{ricin}, simple calculation yields that 
$$\ba{ll}\ad\frac{d}{dt}( P_{i+1} (\t;t)-\bar P_{i+1} (\t;t))+( P_{i+1} (\t;t)-\bar P_{i+1} (\t;t))A+A'( P_{i+1} (\t;t)-\bar P_{i+1} (\t;t))\\
\ns\ad\q -2 P_{i+1} (\t;t)BR(\t;t)^{-1}B' v_i (t)+2\bar P_{i+1} (\t;t)BR(\t;t)^{-1}B'\bar v_i (t)\\
\ns\ad\q+ v_i(t)BR(t;t)^{-1}R(\t;t)R(t;t)^{-1}B' P (t;t)-\bar v_i (t)BR(t;t)^{-1}R(\t;t)R(t;t)^{-1}B'\bar v_i(t)=0.\ea$$
Using the uniform bound in \eqref{vbound},  we can conclude that
$$\Vert  P_{i+1} (\t;\cdot)- \bar  P_{i+1} (\t;\cdot)\Vert_{C([T-\d,T];\dbS)}\leq K\d \Vert  v_i (t)-\bar  v_i (t)\Vert_{C([T-\d,T];\dbS)}.$$
Thus if $\d$ is small, there exists a unique solution of \eqref{riccati-1} on $[T-\d,T]$. For a general time interval $[t_0,T]$, we can divide the interval into $[T-\d,T],[T-\d,T-2\d],\cdots$. One can conclude that there exists a unique solution on $[0,T]$. By the form of the constant in \eqref{vbound}, we know the bound of $P(\t;t)$ is independent of $L_0$.
	\end{proof}

Now we are ready to present our result for dissipative semi-linear diffusions.

\begin{theorem}\label{mainlemma-2} Under Assumption \ref{A-3} and \ref{B-0}, if $L_0$ is large,
	there exists a unique equilibrium $\mu$.
\end{theorem}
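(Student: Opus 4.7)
The plan is to strengthen the arguments in the proof of Theorem \ref{mainthm-0} by exploiting the dissipativity of $A(t)$ to replace the boundedness hypothesis on $a,b$ with $L_0$-dependent estimates and, more importantly, to turn the fixed-point argument into a genuine contraction whose constant shrinks as $L_0\to\infty$. The first step is to invoke the preceding lemma to obtain, for $L_0$ large, a unique solution $P(\tau;t)$ of \eqref{riccati-1} that is bounded uniformly in $\mu\in\sM_\gamma$ by a constant $K_P$ independent of $L_0$. The closed-loop drift matrix $\wt A(t):=A(t)-B(t)R(t;t)^{-1}B'(t)P(t;t)$ then inherits the strong dissipativity of $A(t)$: for $L_0$ large, $\sup_{0\leq t\leq T}\lambda_{\max}(\wt A(t))\leq -L_0/2$, since the Riccati correction has bounded operator norm uniformly in $\mu$.

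Next I would use this dissipativity to derive uniform moment bounds on the closed-loop state. Under the equilibrium feedback, the SDE reads
\[
dX(t)=\bigl[\wt A(t)X(t)-B(t)R(t;t)^{-1}B'(t)p(t;t)+a(t,\rho(t))\bigr]dt+b(t,\rho(t))dW(t).
\]
Applying It\^o's formula to $|X|^2$ with the dissipative generator, together with the linear growth of $a,b$ from Assumption \ref{A-3}, yields
\[
\BE|X(t)|^2\leq \frac{C_1}{L_0}\bigl(1+\sup_{s}|p(s;s)|^2+\sup_s|a(s,\rho(s))|^2+\sup_s|b(s,\rho(s))|^2\bigr)+e^{-L_0 t/2}\BE|\xi|^2,
\]
and an analogous $(2+\delta)$-moment bound follows. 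Substituting this back into the linear ODE \eqref{riccati-2} for $p$ and solving it via the variation-of-constants formula driven by the dissipative generator $A'-P(\cdot;\cdot)BR^{-1}B'$ produces
\[
|p(t;t)|\leq \frac{C_2}{L_0}\bigl(1+\sup_s|a(s,\mu(s))|\bigr),
\]
which replaces the first line of \eqref{plip} without requiring $a$ to be bounded. The same computation applied to the difference $p_1-p_2$ corresponding to curves $\mu_1,\mu_2$, together with Assumption \ref{A-3}(1), gives $|p_1(t;t)-p_2(t;t)|\leq (C_3/L_0)\,m(\mu_1,\mu_2)$, sharpening the second line of \eqref{plip} with the decisive small factor $1/L_0$.

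With these refined estimates the feedback $u(t,x)=-R(t;t)^{-1}B'(t)[P(t;t)x+p(t;t)]$ is uniformly Lipschitz in $x$ and its $\mu$-dependence is Lipschitz with constant of order $1/L_0$. Revisiting the stability estimate of Lemma \ref{contiuityofsolution}, but replacing the Gronwall exponential $e^{CT}$ by the dissipative gain of the closed-loop semigroup, I would obtain
\[
m\bigl(\cT^\gamma_1\circ\cT_2(\mu_1),\cT^\gamma_1\circ\cT_2(\mu_2)\bigr)\leq \frac{C_4}{L_0}\,m(\mu_1,\mu_2),
\]
with $C_4$ independent of $L_0$. Choosing $L_0>C_4$, the map $\cT^\gamma_1\circ\cT_2$ becomes a strict contraction on the complete metric space $(\sM_\gamma,m)$, so Banach's fixed-point theorem yields simultaneously the existence and the uniqueness of the equilibrium.

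The main obstacle is extracting the $1/L_0$ factor throughout. Ordinary Gronwall bookkeeping produces $e^{CT}$, which would be useless for a contraction on $\sM_\gamma$; instead every propagation step --- both the forward transport of perturbations in $\rho$ through the SDE and the backward transport of perturbations in $\mu$ through the Riccati pair $(P,p)$ --- must be handled with a dissipative variation-of-constants representation that contributes only a factor $1/L_0$. Assumption \ref{A-3}(1), providing Lipschitz dependence of $a,b,F,H$ on $\rho$ in Wasserstein distance, is precisely what lets this bookkeeping close through all intermediate estimates and survive the coupling between the forward SDE and the Riccati $p$-equation.
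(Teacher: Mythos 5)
Your proposal is correct and follows essentially the same route as the paper: the paper also runs the Picard iteration $\mu_{n+1}=\cT^\gamma_1\circ\cT_2(\mu_n)$ and uses It\^o's formula with the dissipativity of $A(t)$ to produce the small factor $L/(L_0-L)$ in both the uniform moment bound and the difference estimate, concluding that the sequence is Cauchy (equivalently, that the map is a contraction for $L_0$ large). The only cosmetic difference is that the paper extracts the $1/L_0$ gain solely from the forward SDE via Gronwall, keeping the crude bound $|p_1(t;t)-p_2(t;t)|^2\leq L\sup_t\BE|X_1(t)-X_2(t)|^2$ for the backward part, whereas you propose to also harvest a $1/L_0$ factor from the dissipative variation-of-constants formula for $p$; both suffice since the factor is only needed once.
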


\begin{proof} Throughout the proof,  $L$ is a generic  constant varying from place to place but independent of $L_0$.
	Let $\mu_1(t)=\gamma$ and $u_1(t,x)=-R(t;t)^{-1}B'(t) [P(t;t)x+ p_1(t;t)]$, where
	$p_1(\cdot;\cdot)$ is the solution of \eqref{riccati-2} using $\Ba(t)=a(t,\mu_1(t)).$ Then let $\mu_2$ be distribution curve of the solution $X_2$ of \eqref{semilinear} using strategy $u_1$. Recursively repeating such processes, we can get a sequence of $\{\mu_n:\mu_n(0)=\gamma\}$.
	 Note that in the 2-step recursion method,
	$$u_i(t,x)=-R(t;t)^{-1}B(t)' [P(t;t)x+ p_{i-1}(t;t)].$$
	By Assumption  \ref{A-3}, using Ito's formula, we have
	$$\ba{ll}\ds\frac d{dt}\BE|X_i(t)|^2\ad=-2(L_0-L)\BE|X_i(t)|^2+L(1+| p_{i-1}(t;t)|^2)\\
	\ns\ad\leq -2(L_0-L)\BE|X_i(t)|^2+L(1+\sup_{0\leq t\leq T}\BE|X_{i-1}(t)|^2)\ea$$
	Grownwall's inequality implies that
	$$\sup_{0\leq t\leq T}\BE|X_i(t)|^2\leq \frac{L}{L_0-L}\sup_{0\leq t\leq T}\BE|X_{i-1}(t)|^2+L(1+\BE|\xi|^2)$$
	If $L_0>0$ is large, we can  see that 
	\begin{equation}\label{Xbound-2} \sup_{0\leq t\leq T}\BE|X_i(t)|^2\leq L(1+\BE|\xi|^2).\end{equation}

	Using It\^o's formula, we have
	\bea\ba{ll}\ad \frac{d}{dt}\BE|X_1(t)-X_2(t)|^2\\ [2mm]
	\ns\ad\leq (-2L_0+L)\BE|X_1(t)-X_2(t)|^2+L\vert p_1(t;t)-p_2(t;t)\vert^2\\ [2mm]
	\ns\ad \leq (-2L_0+L)\BE|X_1(t)-X_2(t)|^2+L\sup_{0\leq s\leq T}\BE|X_0(s)-X_1(s)|^2.\ea\eea
	where we use the following 
	$$|b(t,\mu_1(t))-b(t,\mu_2(t))|^2\leq Lw^2(\rho_1(t),\rho_2(t))\leq L\BE|X_1(t)-X_2(t)|^2.$$
	 Grownwall's inequality implies
	$$\ba{ll}\ds
	\sup_{0\leq t\leq T}\BE|X_1(t)-X_2(t)|^2\leq \frac{L}{L_0-L}\sup_{0\leq t\leq T}\BE|X_0(t)-X_1(t)|^2.\ea$$
	If $L_0$ is large, by Lemma \ref{lemmaXrho},  $\mu_n$ is a Cauchy sequence in $\sM_\gamma$ and its limit is the unique equilibrium.
	\end{proof}

\section{Mean-field Game}\label{sec:mfg}
In this section, we compare our results with a mean-field game  for  infinite-many symmetric players. We use the same notations from the previous sections.
 
For $i=1,\cdots, N$,  the dynamic for $i$th player is
\bel{SDE-game}dX_i(t)=a(t,X_i(t),\mu^{-i}_N(t), u_i(t))dt+b(t,X_i(t),\mu^{-i}_N(t))dW_i(t)\eel
where $\mu_{N}^{-i}(t,dx)=\frac{1}{N-1}\sum_{j\neq i}\d_{X_j(t)}(dx)$. The $i$th player makes his decision based the following time-inconsistent cost functional
$$\dbV^i(t,x,\mu_N^{-i};u_i)=\dbJ(t;t,x,\mu_N^{-i};u_i),$$
where
$$\dbJ(\t;t,x,\mu;u):=\dbE_{t,x}\bigg[\int_t^Tf(\t;s,X(s),\mu(s);u(s))ds+g(\t;X(T),\mu(T))\bigg]$$
and $X(s)$ is the solution of \eqref{SDE-game} with initial $x$.

Since the cost functional is time-inconsistent, the players shall look for a local optimal strategy instead of a global one. Since all the players are symmetric, we would suppose that every player should obey  the same strategy. Letting $N\rightarrow\infty$, by the law of large numbers, we can define the {\it equilibrium}  and the corresponding (closed-loop) {\it equilibrium strategy} as following.

\begin{definition}
$\mu^\star\in\sM_\gamma$ is called an equilibrium and 	$u^\star:[0,T]\times\dbR^d\mapsto U$ is called a (closed-loop) equilibrium strategy  if
\begin{itemize}
  \item[{\rm(1)}] $\mu^\star$ is the distribution curve of the following SDE,
$$dY(t)=a(t,Y(t),\mu^\star(t), u^\star(t,Y(t))))dt+b(t,Y(t),\mu^\star(t))dW(t), \q\text{\rm law}(Y(0))=\gamma.$$
 \item[{\rm(2)}] the following local optimality holds,	$$\limsup_{\e\rightarrow0^+}\frac{\dbJ(t;t,x,\mu^\star;u^\star)-\dbJ(t;t,x,\mu^\star;u^\e\oplus u^\star|_{[t+\e,T])}}{\e}\leq0$$ for any $(t,x)\in[0,T]\times\dbR^d$ and $u^\e\in L^2_{\dbF}([t,t+\e),U).$ 
  \end{itemize}
\end{definition}

By Proposition \ref{prooptimal}, the equilibrium defined in Definition \ref{def-optimalcurve} is same as the equilibrium for such a  mean-field game with infinite-many symmetric players. Since $f$ and $g$ here can be fully determined by the mean-field game, such equivalence also illustrates the confusion (3) mentioned in Remark \ref{remconfu}. 

\section{Concluding Remarks}\label{sec:conrem}
In this paper, we proved the existence and uniqueness of an equilibrium for general time-inconsistent McKean-Vlasov dynamics  and a special semi-linear case under some appropriate assumptions. The results generalized the results in \cite{Yong2012b} for  McKean-Vlasov dynamics. Moreover, the equilibrium  coincides with the   equilibrium for a mean-field game of infinite-many symmetric players    with a time-inconsistent cost.

 \noindent{\bf Acknowledgements} The authors would like to thank  Professor Jiongmin Yong for his valuable discussions on the paper.

\end{document}